\numberwithin{equation}{section} 
\newcounter{mnote}
\theoremstyle{plain}
\newtheorem{theorem}{Theorem}[section]
\newtheorem{lemma}[theorem]{Lemma}
\theoremstyle{definition}
\newtheorem{definition}[theorem]{Definition}
\theoremstyle{remark}
\newtheorem{remark}[theorem]{Remark}
\newcommand{\vect}[1]{\mathbf{#1}}
\newcommand{\bu}{\vect{u}}
\newcommand{\bv}{\vect{v}}
\newcommand{\bw}{\vect{w}}
\newcommand{\bx}{\vect{x}}
\newcommand{\by}{\vect{y}}
\newcommand{\field}[1]{\mathbb{#1}}
\newcommand{\nN}{\field{N}}
\newcommand{\nR}{\field{R}}
\newcommand{\vphi}{\varphi}
\newcommand{\maps}{\rightarrow}
\newcommand{\sand}{\quad\text{and}\quad}
\newcommand{\pd}[2]{\frac{\partial #1}{\partial #2}}
\newcommand{\abs}[1]{\left\lvert#1\right\rvert}
\newcommand{\norm}[1]{\left\lVert#1\right\rVert}
\newcommand{\set}[1]{\left\{#1\right\}}
\newcommand{\ip}[2]{\left<#1,#2\right>}
\newcommand{\pnt}[1]{\left(#1\right)}
\newcommand{\pair}[2]{\left(#1,#2\right)}
\newcommand{\B}{\mathcal{B}}
\newcommand{\byd}{\dot{\mathbf{y}}}
\newcommand{\bg}{\mathbf{g}}
\newcommand{\nT}{\mathbb{T}}
\begin{document}
\title[Voigt-Regularized Inviscid Resistive MHD]{Higher-Order Global Regularity of an Inviscid Voigt-Regularization of the Three-Dimensional Inviscid Resistive Magnetohydrodynamic Equations}

\date{April 3, 2011}

\author{Adam Larios}
\address[Adam Larios]{Department of Mathematics\\
                University of California, Irvine\\
		Irvine CA 92697-3875, USA}
\email[Adam Larios]{alarios@math.uci.edu}
\author{Edriss S. Titi}
\address[Edriss S. Titi]{Department of Mathematics, and Department of Mechanical and Aero-space Engineering, University of California, Irvine, Irvine CA 92697-3875, USA. Also The Department of Computer Science and Applied Mathematics, The Weizmann Institute of Science, Rehovot 76100, Israel. Fellow of the Center of Smart Interfaces, Technische Universit\"at Darmstadt, Germany.}
\email[Edriss S. Titi]{etiti@math.uci.edu and edriss.titi@weizmann.ac.il}

\subjclass{Primary: 76W05, 76B03, 76D03, 35B44; Secondary: 76A10, 76A05}
 \keywords{Magnetohydrodynamics, MHD Equations, MHD-Voigt, Navier-Stokes-Voigt, Euler-Voigt, Inviscid Regularization, Turbulence Models, $\alpha-$Models, Blow-Up Criterion for MHD.}

\begin{abstract}
  We prove existence, uniqueness, and higher-order global regularity of strong solutions to a particular Voigt-regularization of the three-dimensional inviscid resistive Magnetohydrodynamic (MHD) equations.  Specifically, the coupling of a resistive magnetic field to the Euler-Voigt model is introduced to form an inviscid regularization of the inviscid resistive MHD system.  The results hold in both the whole space $\nR^3$ and in the context of periodic boundary conditions.   Weak solutions for this regularized model are also considered, and proven to exist globally in time, but the question of uniqueness for weak solutions is still open.  Since the main purpose of this line of research is to introduce a reliable and stable inviscid numerical regularization of the underlying model we, in particular, show that the solutions of the Voigt regularized system converge, as the regularization parameter $\alpha\maps0$, to strong solutions of the original inviscid resistive MHD, on the corresponding time interval of existence of the latter.  Moreover, we also establish a new criterion for blow-up of solutions to the original MHD system inspired by this Voigt regularization.  This type of regularization, and the corresponding results, are valid for, and can also be applied to, a wide class of hydrodynamic models.
  \end{abstract}

 \maketitle
 \thispagestyle{empty}

\section{The Inviscid Resistive MHD-Voigt Model}\label{sec:Intro}
The magnetohydrodynamic equations (MHD) are given by
\begin{subequations}\label{MHD}
\begin{align}
 \partial_t\bu +(\bu\cdot\nabla)\bu
+\nabla (p+\frac{1}{2}|\B|^2)&=(\B\cdot\nabla)\B+\nu\triangle\bu,
\label{MHD_mom}\\
 \partial_t\B +(\bu\cdot\nabla)\B-(\B\cdot\nabla)\bu+\nabla q&=\mu\triangle \B,
\label{MHD_mag}\\
 \nabla\cdot \B = \nabla\cdot \bu &= 0,\label{MHD_div}
\end{align}
\end{subequations}
with appropriate boundary and initial conditions, discussed below.  Here, $\nu\geq0$ is the fluid viscosity, $\mu\geq0$ is the magnetic resistivity, and the unknowns are the fluid velocity field $\bu(\bx,t)=(u_1,u_2,u_3)$, the fluid pressure $p(\bx,t)$, the magnetic field $\B(\bx,t)=(\B_1,\B_2,\B_3)$, and the magnetic pressure $q(\bx,t)$, where $\bx = (x_1,x_2,x_3)$, and $t\geq0$.   Note that, \textit{a posteriori}, one can derive that $\nabla q\equiv 0$.  Due to the fact that these equations contain the three-dimensional Navier-Stokes equations for incompressible flows as a special case (namely, when $\B\equiv0$), the mathematical theory is far from complete.  For a derivation and physical discussion of the MHD equations, see, e.g., \cite{Chandrasekhar_1961}. For an overview of the classical and recent mathematical results pertaining to the MHD equations, see, e.g., \cite{Duvaut_Lions_1972,Davidson_2001}.  In this paper we study the inviscid ($\nu=0$) case, with the following inviscid regularization of \eqref{MHD},
\begin{subequations}\label{MHD_V}
\begin{align}
 -\alpha^2\partial_t \triangle \bu+\partial_t\bu +(\bu\cdot\nabla)\bu
+\nabla (p+\frac{1}{2}|\B|^2)&=(\B\cdot\nabla)\B,
\label{MHD_V_mom}\\
 \partial_t\B +(\bu\cdot\nabla)\B-(\B\cdot\nabla)\bu+\nabla q&=\mu\triangle \B,
\label{MHD_V_mag}\\
 \nabla\cdot \B = \nabla\cdot \bu &= 0,\label{MHD_V_div}\\
 (\bu,\B)|_{t=0}&=(\bu_0,\B_0),\label{MHD_V_in}
\end{align}
\end{subequations}
where $\alpha>0$ is a regularization parameter having units of length, and $\mu>0$.  Note that when $\alpha=0$, we formally retrieve the inviscid ($\nu=0$) resistive system \eqref{MHD}.  Furthermore, adding a forcing term to \eqref{MHD_mom} or \eqref{MHD_mag}, or reintroducing a viscous term $\nu\triangle \bu$ to the right-hand side of  \eqref{MHD_V_mom} (with $\nu>0$), does not pose any additional mathematical difficulties to the results or the analysis of the present work, so it will not be discussed further.

In \cite{Larios_Titi_2009}, we studied inviscid, irresistive ($\nu=0$, $\mu=0$) magnetohydrodynamic equations with an additional inviscid regularization on both the velocity and the magnetic terms.  That is, we studied the system
\begin{subequations}\label{II_MHD_V}
\begin{align}
 -\alpha^2\partial_t \triangle \bu+\partial_t\bu +(\bu\cdot\nabla)\bu
+\nabla (p+\frac{1}{2}|\B|^2)&=(\B\cdot\nabla)\B,
\\
-\alpha_M^2\partial_t \triangle \B+
 \partial_t\B +(\bu\cdot\nabla)\B-(\B\cdot\nabla)\bu+\nabla q&=0,
 \\
 \nabla\cdot \B = \nabla\cdot \bu &= 0,
\end{align}
\end{subequations}
where $\alpha,\alpha_M>0$, the boundary conditions were taken to be periodic, and we also required
\begin{equation}\label{mean_zero}
  \int_{\Omega} \bu\,d\bx =  \int_{\Omega} \B\,d\bx =0.
\end{equation}
Following the ideas of treating the Voigt-regularization of the 3D Euler equations, i.e., the inviscid simplified Bardina model,  presented in \cite{Cao_Lunasin_Titi_2006}, we proved in \cite{Larios_Titi_2009} that if $\bu_0,\B_0\in H^1(\Omega)$ and are divergence free, then \eqref{II_MHD_V} has a unique solution  $(\bu,\B)$ lying in  $C^1((-\infty,\infty),H^1(\Omega))$.  It should be noted that a related regularization known as the simplified Bardina model for the MHD equations has been studied in \cite{Labovsky_Trenchea_2010} in the viscous, resistive ($\nu>0$, $\mu>0$) case, with periodic boundary conditions. This model coincides formally with \eqref{II_MHD_V} in the inviscid, irresistive ($\nu=0$, $\mu=0$) case, a phenomenon which was first explored in \cite{Cao_Lunasin_Titi_2006} in the context of the Euler equations.  We also note that the viscous, irresistive ($\nu>0$, $\mu=0$) Bardina model for the MHD equations in $\nR^2$ was studied in \cite{Catania_2009}.

The simplified Bardina model first arose in the context of turbulence models for the Navier-Stokes equations in \cite{Layton_Lewandowski_2006}.  Based on this work, the authors of \cite{Cao_Lunasin_Titi_2006} studied the simplified Bardina model in the special case where the filtering is given by inverting the Helmholtz operator $I-\alpha^2\triangle$.  In \cite{Cao_Lunasin_Titi_2006}, the authors also studied the inviscid simplified Bardina model with this choice of filtering, which is known as the Euler-Voigt (or sometimes Euler-Voight) model, and proved the in their paper global regularity of solutions to the Euler-Voigt (i.e., inviscid simplified Bardina) system.  Higher-order regularity, including spatial analytic regularity, of solutions to the Euler-Voigt model was then established in \cite{Larios_Titi_2009}.

The particular regularization considered in the present work, known as a Voigt-regularization, belongs to the class of models known as the $\alpha$-models, which have a rich recent history (see, e.g., \cite{Cao_Lunasin_Titi_2006, Foias_Holm_Titi_2002, Ilyin_Lunasin_Titi_2006, Chen_Foias_Holm_Olson_Titi_Wynne_1998_PF, Chen_Foias_Holm_Olson_Titi_Wynne_1999, Holm_Titi_2005, Chen_Foias_Holm_Olson_Titi_Wynne_1998_PRL, Cheskidov_Holm_Olson_Titi_2005}, also see, e.g.,  \cite{Cao_Lunasin_Titi_2006,Larios_Titi_2009} for historical discussions). Voigt-type regularizations in the context of various hydrodynamic models have been the focus of much recent research, see, e.g., \cite{Catania_2009,Catania_Secchi_2009,Cao_Lunasin_Titi_2006,Larios_Titi_2009,Larios_Lunasin_Titi_2010,Ebrahimi_Holst_Lunasin_2010,Levant_Ramos_Titi_2009,Khouider_Titi_2008,Ramos_Titi_2010,Kalantarov_Levant_Titi_2009,Kalantarov_Titi_2009,Khouider_Titi_2008}.  See also \cite{Ebrahimi_Holst_Lunasin_2010} for the application of Navier-Stokes-Voigt model in image inpainting. Voigt-regularizations of parabolic equations are a special case of pseudoparabolic equations, that is, equations of the form $Mu_t+Nu=f$, where $M$ and $N$ are operators which could be non-linear, or even non-local.  For more about pseudoparabolic equations, see, e.g., \cite{DiBenedetto_Showalter_1981,Peszynska_Showalter_Yi_2009,Showalter_1975_nonlin,Showalter_1975_Sobolev2,Showalter_1972_rep,Carroll_Showalter_1976,Showalter_1970_SG,Showalter_1970_odd,Bohm_1992}.

We note that if one reintroduces a viscous term $\nu\triangle \bu$ to the right-hand side of  \eqref{MHD_V_mom} (with $\nu>0$), it is possible to make sense of Dirichlet (no-slip) boundary conditions $\bu=0$, and all of the results of the present work hold for such a case.  Observe that the results reported here are also valid in the whole space $\nR^3$ by employing the relevant analogue tools for treating the Navier--Stokes in the whole space.  We also remark that in the case $\nu>0$, $\mu>0$, and $\alpha>0$, one can consider the case of physical boundary conditions for \eqref{MHD_V}, that is, $\bu\big|_{\partial\Omega}=0$, $\mathbf{n}\cdot\B\big|_{\partial\Omega}=0$ and $\mathbf{n}\times(\nabla\times\B\big)|_{\partial\Omega}=0$, which are also known as no-slip, superconductor boundary conditions (see, e.g., \cite{Cao_Ettinger_Titi_2010,Duvaut_Lions_1972,Schnack_2009}).  With these boundary conditions, one can also prove that the system \eqref{MHD_V} enjoys global regularity with appropriate modifications to the methods employed in the present work, a subject of a forthcoming paper.  However, the higher-order regularity of solutions to \eqref{MHD_V} in the case of physical boundary conditions does not follow directly from the proofs below, and one would have to modify the functional spaces taking into consideration the presence of a physical boundary (see, e.g., \cite{Kukavica_Vicol_2009} and references therein).

System \eqref{MHD_V} was introduced and studied in the two-dimensional case in \cite{Oskolkov_1975_MHD}, where global well-posedness was proven under the assumption that $\bu_0\in V$, $\B_0\in H$.  The three-dimensional case was studied in \cite{Catania_Secchi_2009}, where global regularity was proven, assuming the initial data $\bu_0\in H^2(\Omega)$, $\B_0\in H^1(\Omega)$ and are divergence free.
Here, we relax the hypotheses of the theorems given in \cite{Catania_Secchi_2009} by requiring only that $\bu_0\in H^1(\Omega)$, $\B_0\in L^2(\Omega)$ and are divergence free for the existence of weak solutions, and $\bu_0, \B_0\in H^1(\Omega)$ for the existence of strong solutions.  Furthermore, we prove the uniqueness of strong solutions to \eqref{MHD_V}, a result which is stated, but not proven, in \cite{Catania_Secchi_2009}.  We also prove the higher-order regularity of \eqref{MHD_V}.
In Section \ref{sec:Pre}, we introduce some notation and preliminary results. In Section \ref{sec:Exist_Unique}, we establish the global existence of weak and strong solutions, and the uniqueness of strong solutions among the class of weak solutions.  We give a complete proof using the Galerkin method, and we justify rigorously the a priori estimates and the existence and uniqueness results.  In Section \ref{sec:Reg}, we establish higher-order regularity of strong solutions. In Section \ref{sec:conv_blow_up}, we show that strong solutions to the Voigt-regularized MHD equations \eqref{MHD_V} converge, as $\alpha \to 0$,  to strong solutions of the MHD equations \eqref{MHD} (with $\nu,\mu\geq0$) on the time interval of existence of the latter.  Furthermore, in Section \ref{sec:conv_blow_up}, we establish a blow-up criterion for solutions of the inviscid, irresistive MHD equations, which can be easily implemented in numerical simulations.

\section{Preliminaries}\label{sec:Pre}
In this section, we introduce some preliminary material and notations which are commonly used in the mathematical study of fluids, in particular in the study of the Navier-Stokes equations (NSE).  For a more detailed discussion of these topics, we refer to \cite{Constantin_Foias_1988,Temam_1995_Fun_Anal,Temam_2001_Th_Num,Foias_Manley_Rosa_Temam_2001}.

In this paper, we consider only periodic boundary conditions.  Our space of test functions is defined to be
\[\mathcal{V}:=\set{\vphi\in\mathcal{F}:\nabla\cdot\vphi=0\text{ and  }\int_\Omega\vphi(x)\,d\bx=0},\]
where $\mathcal{F}$ is the set of all three-dimensional vector-valued trigonometric polynomials with periodic domain $\Omega=\nT^3:=[0,1]^3$.
We denote by $L^p$ and $H^m$ the usual Lebesgue and Sobolev spaces over $\Omega$, and define $H$ and $V$ to be the closures of $\mathcal{V}$ in $L^2$ and $H^1$, respectively.  We define the inner products on $H$ and $V$ respectively by
\[(\bu,\bv)=\sum_{i=1}^3\int_\Omega u_iv_i\,d\bx
\sand
((\bu,\bv))=\sum_{i,j=1}^3\int_\Omega\pd{u_i}{x_j}\pd{v_i}{x_j}\,d\bx,
\]
and the associated norms $|\bu|=(\bu,\bu)^{1/2}$, $\|\bu\|=((\bu,\bu))^{1/2}$.  Note that $((\cdot,\cdot))$ is a norm due to the Poincar\'e inequality, \eqref{poincare} below.
  We denote by $V'$ the dual space of $V$.  The action of $V'$ on $V$ is denoted by $\ip{\cdot}{\cdot}\equiv \ip{\cdot}{\cdot}_{V'}$.

Let $X$ be a Banach space with dual space $X'$.  We denote by $L^p((a,b),X)$ the space of Bochner measurable functions $t\mapsto \bw(t)$, where $\bw(t)\in X$ for a.e. $t\in(a,b)$, such that the integral $\int_a^b\|\bw(t)\|_X^p\,dt$ is finite (see, e.g., \cite{Adams_Fournier_2003}). A similar convention is used for $C^k((a,b),X)$.  The space $C_w([0,T],X)$ is the subspace of $L^\infty([0,T],X)$ consisting of all functions which are weakly continuous, that is, all functions $\bw\in L^\infty([0,T],X)$ such that $\ip{\bw(t)}{\bv}$ is a continuous function for all $\bv\in X'$.  Note that here and below, we abuse notation slightly, writing $\bw(\cdot)$ for the map $t\mapsto \bw(t)$.  In the same vein, we often write the vector-valued function $\bw(\cdot,t)$ as $\bw(t)$ when $\bw$ is a function of both $\bx$ and $t$.  We stress that whenever we write $\bw$ satisfies $\frac{d\bw}{dt}\in L^1((a,b),X)$, we implicity mean that  $\bw:(a,b)\maps X$ is absolutely continuous in time with values in $X$ (see, e.g., \cite{Temam_2001_Th_Num}).

We denote by $P_\sigma:L^2\maps H$ the Leray-Helmholtz projection
operator and define the Stokes operator $A:=-P_\sigma\triangle$ with
domain $\mathcal{D}(A):=H^2\cap V$.  $A$ can be extended as a bounded linear operator $A:V\maps V'$, such that $\|A\bv\|_{V'}=\|\bv\|$ for all $\bv\in V$. It is known that $A^{-1}:H\maps \mathcal{D}(A)$ is a
positive-definite, self-adjoint, compact operator, and that there is an orthonormal basis $\set{\bw_i}_{i=1}^\infty$ of $H$ consisting of eigenvectors of $A$ corresponding to eigenvalues $\set{\lambda_i}_{i=1}^\infty$ such that $A\bw_j=\lambda_j\bw_j$ and
$0<\lambda_1\leq\lambda_2\leq\lambda_3\leq\cdots$ (see, e.g., \cite{Constantin_Foias_1988,Temam_1995_Fun_Anal,Temam_2001_Th_Num}) repeated according to their multiplicity.
Let $H_m:=\text{span}\set{\bw_1,\ldots,\bw_m}$, and let $P_m:H\maps H_m$ be the $L^2$ orthogonal projection onto $H_m$ with respect to $\set{\bw_i}_{i=1}^\infty$.  Notice that in the case of periodic boundary conditions, i.e., in the torus $\nT^3$, we have $A=-\triangle$, and $\lambda_1=(2\pi)^{-2}$ (see, e.g., \cite{Constantin_Foias_1988,Temam_1995_Fun_Anal}).  We have the continuous embeddings
\begin{equation}\label{embed}
 \mathcal{D}(A)\hookrightarrow V\hookrightarrow H\equiv H'\hookrightarrow V'.
\end{equation}
Moreover, by the Rellich-Kondrachov Compactness Theorem (see, e.g., \cite{Evans_1998,Adams_Fournier_2003}), these embeddings are compact.

It will be convenient to suppress the pressure term by applying the Leray-Helmholtz projection $P_\sigma$ and use the standard notation for the non-linearity,
\begin{equation}\label{Bdef}
 B(\bu,\bv):=P_\sigma((\bu\cdot\nabla)\bv)
\end{equation}
for $\bu,\bv\in\mathcal{V}$.  We list several important properties of $B$ which can be found for example in \cite{Constantin_Foias_1988, Foias_Manley_Rosa_Temam_2001, Temam_1995_Fun_Anal, Temam_2001_Th_Num}.  The proof of this lemma relies mainly on \eqref{Bdef} and inequalities of the type \eqref{Agmon1/2}-\eqref{poincare} below.
\begin{lemma}\label{B:prop}
The operator $B$ defined in \eqref{Bdef} is a bilinear form which can be extended as a continuous map $B:V\times V\maps V'$.  Furthermore, the following properties hold.

 \begin{enumerate}[(i)]
  \item For $\bu$, $\bv$, $\bw\in V$,
\begin{equation}\label{B:alt}
 \ip{B(\bu,\bv)}{\bw}_{V'}=-\ip{B(\bu,\bw)}{\bv}_{V'},
\sand
 \ip{B(\bu,\bv)}{\bv}_{V'}=0.
\end{equation}
\item We have the following estimates.
\begin{subequations}\label{B:est}
  \begin{align}
  \label{B:326}
|\ip{B(\bu,\bv)}{\bw}_{V'}|
&\leq C|\bu|^{1/2}\|\bu\|^{1/2}\|\bv\|\|\bw\|,
&\quad\forall\;\bu\in V, \bv\in V, \bw\in V,\\
\label{B:623}
|\ip{B(\bu,\bv)}{\bw}_{V'}|
&\leq C\|\bu\|\|\bv\||\bw|^{1/2}\|\bw\|^{1/2},
&\quad\forall\;\bu\in V, \bv\in V, \bw\in V,\\
\label{B:236}
|\ip{B(\bu,\bv)}{\bw}_{V'}|
&\leq C|\bu|\|\bv\|^{1/2}|A\bv|^{1/2}\|\bw\|,
&\quad\forall\;\bu\in H, \bv\in \mathcal{D}(A), \bw\in V,\\
\label{B:632}
|\ip{B(\bu,\bv)}{\bw}_{V'}|
&\leq C\|\bu\|\|\bv\|^{1/2}|A\bv|^{1/2}|\bw|,
&\quad\forall\;\bu\in V, \bv\in \mathcal{D}(A), \bw\in H,\\
\label{B:i22}
|\ip{B(\bu,\bv)}{\bw}_{V'}|
&\leq C\|\bu\|^{1/2}|A \bu|^{1/2}\|\bv\||\bw|,
&\quad\forall\;\bu\in \mathcal{D}(A), \bv\in V, \bw\in H,\\
\label{B:362s}
|\ip{B(\bu,\bv)}{\bw}_{\mathcal{D}(A)'}|
&\leq C|\bu|^{1/2}\|\bu\|^{1/2}| \bv||A\bw|,
&\quad \forall\;\bu\in V, \bv\in H, \bw\in\mathcal{D}(A),\\
\label{B:263}
|\ip{B(\bu,\bv)}{\bw}_{V'}|
&\leq C|\bu||A\bv||\bw|^{1/2}\|\bw\|^{1/2},
&\quad\forall\;\bu\in H, \bv\in \mathcal{D}(A), \bw\in V.
\end{align}
\end{subequations}
 \end{enumerate}
\end{lemma}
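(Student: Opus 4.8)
The plan is to reduce every assertion to the explicit trilinear form. For test functions $\bu,\bv,\bw\in\mathcal{V}$, since $P_\sigma$ is the orthogonal projection onto $H$ and $\bw$ is divergence-free (so that $P_\sigma\bw=\bw$), I would first record that
\[
 \ip{B(\bu,\bv)}{\bw}=\int_\Omega ((\bu\cdot\nabla)\bv)\cdot\bw\,d\bx=\sum_{i,j=1}^3\int_\Omega u_j(\partial_j v_i)w_i\,d\bx=:b(\bu,\bv,\bw).
\]
This form is manifestly bilinear in the pair $(\bu,\bv)$, and the strategy is to establish all of the identities and estimates first for $b$ on $\mathcal{V}$ and then to extend them by the density of $\mathcal{V}$ in $H$, $V$, and $\mathcal{D}(A)$ to the spaces named in each hypothesis.

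For part (i) I would integrate by parts in $x_j$. The boundary terms vanish by periodicity, and since $\nabla\cdot\bu=\sum_j\partial_j u_j=0$ one obtains
\[
 b(\bu,\bv,\bw)=-\sum_{i,j=1}^3\int_\Omega u_j v_i(\partial_j w_i)\,d\bx=-b(\bu,\bw,\bv),
\]
which is the first identity in \eqref{B:alt}; taking $\bw=\bv$ forces $b(\bu,\bv,\bv)=-b(\bu,\bv,\bv)=0$, giving the second.

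For part (ii), each line is a single application of the generalized H\"older inequality to $b(\bu,\bv,\bw)=\sum_{i,j}\int u_j(\partial_j v_i)w_i$, splitting the triple product across three Lebesgue exponents whose reciprocals sum to $1$, followed by the standard three-dimensional interpolation and embedding inequalities: the Ladyzhenskaya-type bound $\|f\|_{L^3}\le C|f|^{1/2}\|f\|^{1/2}$, the Sobolev embedding $\|f\|_{L^6}\le C\|f\|$, Agmon's inequality $\|f\|_{L^\infty}\le C\|f\|^{1/2}|Af|^{1/2}$, and $\|\nabla f\|_{L^6}\le C|Af|$. Concretely, \eqref{B:326} follows from the H\"older exponents $(3,2,6)$ on $(\bu,\nabla\bv,\bw)$ and \eqref{B:623} from $(6,2,3)$; \eqref{B:236} and \eqref{B:632} from the triples $(2,3,6)$ and $(6,3,2)$ together with $\|\nabla\bv\|_{L^3}\le C\|\bv\|^{1/2}|A\bv|^{1/2}$; \eqref{B:i22} from placing $\bu$ in $L^\infty$ via Agmon with the triple $(\infty,2,2)$; and \eqref{B:263} from $(2,6,3)$ with $\|\nabla\bv\|_{L^6}\le C|A\bv|$. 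The one case requiring a preliminary move is \eqref{B:362s}: there $\bv$ lies only in $H$, so before applying H\"older I would use the antisymmetry of part (i) to rewrite $\ip{B(\bu,\bv)}{\bw}_{\mathcal{D}(A)'}=-b(\bu,\bw,\bv)$, transferring the derivative onto $\bw\in\mathcal{D}(A)$, and then bound with the triple $(3,6,2)$ on $(\bu,\nabla\bw,\bv)$.

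Finally, the continuity $B:V\times V\maps V'$ is immediate from \eqref{B:326} (or \eqref{B:623}): for $\bu,\bv\in V$ the map $\bw\mapsto b(\bu,\bv,\bw)$ is a bounded linear functional on $V$, so $B(\bu,\bv)\in V'$ with $\|B(\bu,\bv)\|_{V'}\le C\|\bu\|\|\bv\|$, and a density argument passes each identity and inequality from $\mathcal{V}$ to the stated function spaces. There is no deep obstacle here; the only real work is bookkeeping, namely matching to each slot both the correct H\"older triple and the correct interpolation inequality for the regularity assumed. The two points to watch are that whenever a full $|A\cdot|$ is to appear one must invoke $\|\nabla\cdot\|_{L^6}\le C|A\cdot|$ rather than the weaker interpolation, and that \eqref{B:362s} genuinely requires the integration by parts from part (i) before any H\"older splitting is possible.
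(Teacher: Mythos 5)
Your proposal is correct and is exactly the argument the paper has in mind: the paper gives no proof of Lemma \ref{B:prop}, deferring to the cited references and noting only that the proof ``relies mainly on \eqref{Bdef} and inequalities of the type \eqref{Agmon1/2}--\eqref{poincare},'' which is precisely your scheme of writing out the trilinear form, integrating by parts using $\nabla\cdot\bu=0$ for \eqref{B:alt}, and pairing H\"older triples with Ladyzhenskaya/Sobolev/Agmon interpolation (plus elliptic regularity \eqref{elliptic_reg} and density of $\mathcal{V}$) for \eqref{B:est}. Your observation that \eqref{B:362s} requires the antisymmetry of part (i) first, since $\bv$ lies only in $H$, is the one genuinely delicate point and you handle it correctly.
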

\noindent We also define the trilinear form $b:V\times V\times V\maps\nR$ by
\begin{equation}\label{b_tri}
b(\bu,\bv,\bw)
:= \ip{B(\bu,\bv)}{\bw}_{V'}.
\end{equation}
Here and below, $K_\alpha, K_{\alpha,\mu},C(\cdots)$, etc. denote generic constants which depend only upon the indicated parameters, and which may change from line to line.  Let us distinguish between $K$ and $C$.  $K$ will depend on some norm of the solutions, but $C$ will not be used for constants which depend on functions.

Next, we recall Agmon's inequality (see, e.g., \cite{Agmon_1965, Constantin_Foias_1988}).  For $\bu\in\mathcal{D}(A)$ we have
\begin{equation}\label{Agmon1/2}
 \|\bu\|_{L^\infty(\Omega)} \leq C\|\bu\|^{1/2}|A\bu|^{1/2}\:\:.
\end{equation}
We also have the Sobolev and Ladyzhenskaya inequalities in three-dimensions,
\begin{align}
\label{L3_interp}
\|\bu\|_{L^3}&\leq C|\bu|^{1/2}\|\bu\|^{1/2}
\\\label{L6_sobol}
\|\bu\|_{L^6}&\leq C\|\bu\|,
\end{align}
 for every $\bu\in V$.  Furthermore, for all $\bw\in V$, we have
the Poincar\'e inequality
\begin{equation}\label{poincare}
   |\bw|\leq\lambda_1^{-1/2} |\nabla\bw|
   =\lambda_1^{-1/2}\|\bw\|.
\end{equation}
Due to \eqref{poincare} and the elliptic regularity of the solutions to the Stokes equation (see, e.g., \cite{Constantin_Foias_1988,Temam_1995_Fun_Anal}), for $\bw\in \mathcal{D}(A)$, we have the norm equivalence
\begin{equation}\label{elliptic_reg}
   |A\bw|\cong\|\bw\|_{H^2}.
\end{equation}
Finally, we note a result of deRham (see, e.g., \cite{Wang_1993, Temam_2001_Th_Num}), which states that if $\bg$ is a locally integrable function (or more generally, a distribution), we have
\begin{equation}\label{deRham}
 \bg =\nabla p \text{ for some distribution $p$ iff } \ip{\bg}{\bv}=0\quad \forall \bv\in\mathcal{V}.
\end{equation}
This result is useful for recovering the pressure term as it is treated, for example, in \cite{Temam_2001_Th_Num}.

\section{Existence and Uniqueness of Solutions}\label{sec:Exist_Unique}

This section is devoted to stating and proving our main result.  As mentioned in the introduction, we relax the conditions of the results of \cite{Catania_Secchi_2009}, where it was assumed that $\bu_0\in \mathcal{D}(A)$, $\B_0\in V$ to derive the existence of a strong solution.  Here we define the notion of a weak solution to \eqref{MHD_V}, for which we only need to assume that $\bu_0\in V$, $\B_0\in H$ to show global existence without uniqueness.  As for a strong solution, we only need to assume $\bu_0, \B_0\in V$ to prove global existence, uniqueness, and continuous dependence on initial data.  We note that, although the major features of this model that allow for a proof of well-posedness were exploited formally in \cite{Catania_Secchi_2009}, the \textit{a priori} estimates can be sharpened.  This allows us to define the notion of weak solutions and to prove their global existence.  We also prove the uniqueness of strong solutions, which was stated without proof in  \cite{Catania_Secchi_2009}.  Furthermore, there are subtleties in passing to the limit due to the addition of the Voigt term $-\alpha^2\partial_t\triangle\bu$ in the momentum equation \eqref{MHD_V_mom} that one has to address in the rigorous proof.  Here, we give a fully rigorous derivation and justification of these estimates, as well as the passage to the limit.

In order to prove global existence, we use the Galerkin approximation procedure, based on the eigenfunctions of the Stokes operator.  The proof is broken into several steps.  First, we show that there exists a solution to the finite-dimensional approximating Galerkin problem which is bounded in the appropriate norms.  In particular, we show that the time derivatives of the sequence of approximating solutions are uniformly bounded in the appropriate spaces.  We then extract appropriate subsequences using the Banach-Alaoglu and Aubin Compactness Theorems (see, e.g., \cite{Constantin_Foias_1988},p. 68-71 or \cite{Robinson_2001,Temam_2001_Th_Num}), and pass to the limit to obtain a global solution to system \eqref{MHD_V}.  Finally, we argue that the solutions satisfy the initial conditions in the sense given in Definition \ref{def:wk_st} below.

\subsection{Existence of Weak Solutions}\label{sec:exist}

Before we begin, we rewrite \eqref{MHD_V} in functional form.  Applying $P_\sigma$ to \eqref{MHD_V} and using the notation introduced in Section \ref{sec:Pre}, we obtain the following system, which is equivalent to \eqref{MHD_V_proj} (see, e.g., \cite{Temam_2001_Th_Num} for showing the equivalence in the context of the Navier-Stokes equations)
\begin{subequations}\label{MHD_V_proj}
\begin{align}
 \frac{d}{dt}\pnt{\alpha^2 A \bu+\bu} &=B(\B,\B)-B(\bu,\bu),
\label{MHD_V_proj_mom}\\
 \frac{d}{dt}\B + \mu A \B&=B(\B,\bu)-B(\bu,\B),
\label{MHD_V_proj_mag}\\
 \B(0)=\B_0,\; \bu(0)&=\bu_0,
 \label{LHproj3}
\end{align}
\end{subequations}
where \eqref{MHD_V_proj_mom} is satisfied in the sense of $L^{4/3}((0,T),V')$, \eqref{MHD_V_proj_mag} is satisfied in the sense of $L^2((0,T),V')$, and \eqref{LHproj3} is satisfied in the sense of Definition \ref{def:wk_st} below.  Systems \eqref{MHD_V} and \eqref{MHD_V_proj} are equivalent, and one can recover the pressure terms $p$ and $q$ ($q\equiv0$) by using \eqref{deRham}, as it is done for the case of the Navier-Stokes equations (see, e.g., \cite{Duvaut_Lions_1972,Temam_2001_Th_Num}).

\begin{definition}\label{def:wk_st}
   Let $\bu_0\in V$, $\B_0\in H$.  We say that $(\bu,\B)$ is a \textit{weak solution} to \eqref{MHD_V_proj}, on the time interval $[0,T]$, if $\bu\in C([0,T],V)$, $\B\in L^2((0,T),V)\cap C_w([0,T],H)$, $\frac{d\bu}{dt}\in L^4((0,T),H)$, $\frac{d\B}{dt}\in L^2((0,T),V')$, and  furthermore, $(\bu,\B)$ satisfies \eqref{MHD_V_proj_mom} in the sense of $L^{4/3}([0,T],V')$ and \eqref{MHD_V_proj_mag} in the sense of $L^2([0,T],V')$.
   Furthermore, if $\bu_0,\B_0\in V$, we say that  $(\bu,\B)$ is a \textit{strong solution} to \eqref{MHD_V_proj} if it is a weak solution, and additionally, $\B\in L^2((0,T),\mathcal{D}(A))\cap C([0,T],V)$, $\frac{d\bu}{dt}\in C([0,T],V)$, and $\frac{d\B}{dt}\in L^2((0,T),H)$.
\end{definition}
With this definition, we are now ready to state and prove the following theorem.

\begin{theorem}\label{thm:weak}
Let $\bu_0\in V$, $\B_0\in H$. Then \eqref{MHD_V_proj} has a weak solution $(\bu,\B)$ for arbitrary  $T>0$.
\end{theorem}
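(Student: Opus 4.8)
The plan is to use the Galerkin approximation scheme based on the eigenfunctions $\set{\bw_i}$ of the Stokes operator, following the outline sketched in the introduction to the section. For each $m$, I would seek approximate solutions $\bu_m(t)=\sum_{i=1}^m a_i(t)\bw_i$ and $\B_m(t)=\sum_{i=1}^m b_i(t)\bw_i$ in $H_m$, satisfying the projected system
\begin{subequations}
\begin{align}
 \frac{d}{dt}\pnt{\alpha^2 A \bu_m+\bu_m} &=P_m B(\B_m,\B_m)-P_m B(\bu_m,\bu_m),\\
 \frac{d}{dt}\B_m + \mu A \B_m&=P_m B(\B_m,\bu_m)-P_m B(\bu_m,\B_m),
\end{align}
\end{subequations}
with initial data $\bu_m(0)=P_m\bu_0$, $\B_m(0)=P_m\B_0$. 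Because $(\alpha^2 A+I)$ restricted to $H_m$ is invertible with bounded inverse, this is a system of ODEs with quadratic (hence locally Lipschitz) nonlinearity, so Picard--Lindel\"of gives a unique local-in-time solution; global existence on $[0,T]$ follows once the a priori bounds below are established.

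The energy estimates are the heart of the argument. Testing the momentum equation with $\bu_m$ and the magnetic equation with $\B_m$, using the orthogonality $\ip{B(\bv,\bw)}{\bw}_{V'}=0$ from \eqref{B:alt}, the coupling terms cancel in the standard MHD fashion, yielding
\begin{equation}
 \frac{1}{2}\frac{d}{dt}\pnt{\alpha^2\|\bu_m\|^2+|\bu_m|^2+|\B_m|^2}+\mu\|\B_m\|^2=0.
\end{equation}
Integrating in time gives a uniform bound: $\bu_m$ is bounded in $L^\infty((0,T),V)$, $\B_m$ is bounded in $L^\infty((0,T),H)\cap L^2((0,T),V)$. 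Crucially, the Voigt term supplies control of $\|\bu_m\|$ (not merely $|\bu_m|$) without any viscosity on the velocity, and this extra regularity is exactly what makes the velocity nonlinearity tractable. Next I would bound the time derivatives using the estimates in Lemma \ref{B:prop}: for $\frac{d\B_m}{dt}$ one pairs the magnetic equation against test functions in $V$ and uses \eqref{B:623} or \eqref{B:326} together with the $L^2_tV$ bound on $\B_m$ to get a uniform $L^2((0,T),V')$ bound; for the combined quantity $\frac{d}{dt}(\alpha^2 A\bu_m+\bu_m)$, paired with $V$, the nonlinearities $B(\bu_m,\bu_m)$ and $B(\B_m,\B_m)$ are estimated via \eqref{B:326}, and since $\bu_m\in L^\infty_tV$ while $\B_m\in L^2_tV\cap L^\infty_tH$, one obtains a uniform bound in $L^{4/3}((0,T),V')$ (the exponent $4/3$ coming from the $L^4_tH$ regularity of $\frac{d\bu_m}{dt}$ recovered after inverting $\alpha^2 A+I$).

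With these uniform bounds, I would extract (by Banach--Alaoglu) weak-$*$ convergent subsequences and, to handle the nonlinear terms, apply the Aubin--Lions compactness theorem to upgrade to strong convergence of $\B_m$ in $L^2((0,T),H)$ and of $\bu_m$ in an appropriate space, which suffices to pass to the limit in the quadratic terms $B(\cdot,\cdot)$. I expect the main obstacle to lie precisely in this passage to the limit for the velocity equation: because the Voigt regularization replaces a parabolic smoothing mechanism with the operator $\alpha^2 A+I$ applied to $\partial_t\bu$, one does not get compactness of $\bu_m$ directly from an $L^2_t\mathcal D(A)$-type bound as in Navier--Stokes; instead one must argue carefully that $\alpha^2 A\bu_m+\bu_m$ converges and then recover convergence of $\bu_m$ itself, keeping track of the weak continuity needed to make sense of the limit as an identity in $L^{4/3}((0,T),V')$. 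Finally I would verify that the limit $(\bu,\B)$ has the regularity demanded by Definition \ref{def:wk_st}, in particular $\bu\in C([0,T],V)$ and $\B\in C_w([0,T],H)$ (the latter via the Strauss lemma on weak continuity), and that the initial conditions are attained in the appropriate sense, completing the proof.
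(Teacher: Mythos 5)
Your proposal follows essentially the same route as the paper's proof: a Galerkin scheme on the Stokes eigenfunctions, the same cancellation-based energy identity giving $\bu_m\in L^\infty_t V$ and $\B_m\in L^\infty_t H\cap L^2_t V$, time-derivative bounds obtained by estimating the nonlinearities in $V'$ (and $\mathcal{D}(A)'$) and inverting $I+\alpha^2 A$ to get $\frac{d\bu_m}{dt}\in L^4_t H$, then Banach--Alaoglu and Aubin compactness to pass to the limit and verify Definition \ref{def:wk_st}. The one point worth noting is that your worry about compactness for $\bu_m$ resolves exactly as you suggest: inverting the Voigt operator yields a bound on $\frac{d\bu_m}{dt}$ itself in $L^4_t H\hookrightarrow L^2_t V'$, so Aubin applies to $\bu_m$ directly, which is precisely how the paper handles it.
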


\begin{proof}

Let $T>0$ be fixed.  Consider the finite dimensional Galerkin approximation of \eqref{MHD_V_proj}, based on the eigenfunctions of the operator $A$ (see Section \ref{sec:Pre}), given by the following system of ODEs in $H_m\times H_m$.
\begin{subequations}\label{irMHD_Gal}
\begin{align}
\label{irMHD_Gal_u}
 \frac{d}{dt}\pnt{\bu_m+\alpha^2 A \bu_m}+P_mB(\bu_m,\bu_m)
 &= P_mB(\B_m,\B_m),
\\\label{irMHD_Gal_B}
 \frac{d}{dt}\B_m +\mu A\B_m + P_mB(\bu_m,\B_m)&=P_mB(\B_m,\bu_m),
\\\label{irMHD_Gal_init}
 \B_m(0)=P_m\B_0,\; \bu_m(0)&=P_m\bu_0.
\end{align}
\end{subequations}
We look for a solution $\bu_m,\B_m\in C^1([0,T_m),H_m)$ of \eqref{irMHD_Gal}.  By applying the operator $(I+\alpha^2A)^{-1}$ to \eqref{irMHD_Gal_u}, we see that \eqref{irMHD_Gal} is equivalent to a system of the form $\byd=\mathbf{F}(\by)$, where $\mathbf{F}:H_m\times H_m\maps H_m\times H_m$ is a quadratic polynomial.  By classical ODE theory, this system has a unique solution on $[0,T_m)$ for some $T_m>0$.   Let $[0,T_m^{\max})$ be the maximal interval where existence and uniqueness of the solution of \eqref{irMHD_Gal} holds.


Next, we show that $T_m^{\text{max}}=\infty$.  Indeed, taking the inner product of \eqref{irMHD_Gal_u} with $\bu_m(t)$ and \eqref{irMHD_Gal_B} with $\B_m(t)$, for $t\in[0,T_m^{\max})$, and integrating by parts with respect to the spatial variable and using \eqref{B:alt}, we have,
\begin{subequations}\label{MHDL2ip}
\begin{align}
 \frac{1}{2}\frac{d}{dt}\pnt{\alpha^2 \|\bu_m\|^2+|\bu_m|^2} &=(B(\B_m,\B_m),\bu_m),
\label{MHDL2ip1}\\
 \frac{1}{2}\frac{d}{dt}|\B_m|^2 + \mu\|\B_m\|^2&=(B(\B_m,\bu_m),\B_m)=-(B(\B_m,\B_m),\bu_m).
\label{MHDL2ip2}
\end{align}
\end{subequations}
Adding \eqref{MHDL2ip1} and \eqref{MHDL2ip2} gives
\begin{align}\label{diff_en_Gal}
 \frac{1}{2}\frac{d}{dt}\pnt{\alpha^2 \|\bu_m\|^2+|\bu_m|^2+|\B_m|^2} &= -\mu\|\B_m\|^2\leq0.
\end{align}
Integrating the equality in \eqref{diff_en_Gal} in time, we obtain for $t\in [0,T_m^{\max})$
\begin{align}
&\notag\quad
 \alpha^2 \|\bu_m(t)\|^2+|\bu_m(t)|^2+|\B_m(t)|^2
 +2\mu\int_0^{t}\|\B_m(s)\|^2\,ds
\\&\notag
=\alpha^2 \|\bu_m(0)\|^2+|\bu_m(0)|^2+|\B_m(0)|^2
\\&\leq\label{u_H1_Gron}
(K_\alpha^1)^2:=\alpha^2 \|\bu_0\|^2+|\bu_0|^2+|\B_0|^2.
\end{align}
This bound, together with the fact that the vector field $\mathbf{F}(\by)$ in \eqref{irMHD_Gal} is a quadratic polynomial, imply  that $T_m^{\max}=\infty$.  Furthermore, we see from \eqref{u_H1_Gron} that for fixed but arbitrary $T>0$, we have
\begin{subequations}
   \begin{align}
\label{u_LiV}
&\text{$\bu_m$ is bounded  in $L^\infty([0,T],V)$,}\\
\label{B_LiH_L2V}
&\text{$\B_m$ is bounded in $L^\infty([0,T],H)\cap L^2([0,T],V)$,}
\end{align}
\end{subequations}
uniformly with respect to $m$.

As mentioned at the beginning of this section, our goal is to extract subsequences of $\set{\bu_m}$ and $\set{\B_m}$ which converge in $L^2((0,T),H)$ by using the Aubin Compactness Theorem (see, e.g., \cite{Constantin_Foias_1988},p. 68-71 or \cite{Robinson_2001,Temam_2001_Th_Num}).  To satisfy the hypotheses of Aubin's theorem, we show that $\frac{d\bu_m}{dt}$ is uniformly bounded in $L^{4}((0,T),H)\hookrightarrow L^{2}((0,T),V')$, and that $\frac{d\B_m}{dt}$ is uniformly bounded in $L^{2}((0,T),V')$, with respect to $m$.    Using  equation \eqref{MHD_V_proj_mom}, we have from \eqref{B:362s},
\begin{align}
&\quad\notag
\norm{(I+\alpha^2A)\frac{d\bu_m}{dt}}_{\mathcal{D}(A)'}
\\&\notag \leq
\|P_mB(\B_m,\B_m)\|_{\mathcal{D}(A)'}+\|P_mB(\bu_m,\bu_m)\|_{\mathcal{D}(A)'}
\\\notag &\leq
C
|\B_m|^{3/2}\|\B_m\|^{1/2}
+ C
|\bu_m|^{3/2}\|\bu_m\|^{1/2}
\\&\leq\label{u_t_Est}
C(K_\alpha^1)^{3/2}\|\B_m\|^{1/2}+ (K_\alpha^1)^{2}\alpha^{-1/2},
\end{align}
where we have used \eqref{u_H1_Gron}.  Estimating differently, we have
\begin{align}
&\quad\notag
\norm{(I+\alpha^2A)\frac{d\bu_m}{dt}}_{V'}
\\&\notag \leq
\|P_mB(\B_m,\B_m)\|_{V'}+\|P_mB(\bu_m,\bu_m)\|_{V'}
\\&=\notag
\sup_{\|\bw\|=1}\pair{B(\B_m,\B_m)}{P_m\bw}
+ \sup_{\|\bw\|=1}\pair{B(\bu_m,\bu_m)}{P_m\bw}
\\\notag &\leq
C\sup_{\|\bw\|=1}|\B_m|^{1/2}\|\B_m\|^{3/2}\|\bw\|
+ C\sup_{\|\bw\|=1}|\bu_m|^{1/2}\|\bu_m\|^{3/2}\|\bw\|
\\&\leq\label{u_t_Est_2}
C(K_\alpha^1)^{1/2}\|\B_m\|^{3/2}+ (K_\alpha^1)^{2}\alpha^{-3/2},
\end{align}
Thus, due to \eqref{u_H1_Gron}, the right-hand side of \eqref{u_t_Est} is uniformly bounded in $L^{4}(0,T)$, and right-hand side of \eqref{u_t_Est_2} is uniformly bounded in $L^{4/3}(0,T)$, and hence, $(I+\alpha^2A)\frac{d\bu_m}{dt}$ is uniformly bounded in $L^{4}([0,T],\mathcal{D}(A)')\cap L^{4/3}([0,T],V')$  with respect to $m$.
By inverting the operator $(I+\alpha^2A)$, we have
\begin{align}\label{u_t_L4V}
\frac{d\bu_m}{dt}
\text{ is bounded in }
L^{4}([0,T],H)
\text{ and }
L^{4/3}([0,T],V),
\end{align}
uniformly with respect to $m$.

Next, we estimate $\frac{d\B_m}{dt}$.  From equation \eqref{MHD_V_proj_mag} we have, thanks to \eqref{B:326} and \eqref{B:623},
\begin{align}
\norm{\frac{d\B_m}{dt}}_{V'}
&\notag\leq
\|P_mB(\B_m,\bu_m)\|_{V'}+\|P_mB(\bu_m,\B_m)\|_{V'} + \mu\|A\B_m\|_{V'}
\\\notag &\leq
C|\B_m|^{1/2}\|\B_m\|^{1/2}\|\bu_m\|+\mu\|\B_m\|
\\\label{B_t_Est}&\leq
C(K^1_\alpha)^{3/2}\alpha^{-1}\|\B_m\|^{1/2}+\mu\|\B_m\|,
\end{align}
where the last estimate is due to \eqref{u_H1_Gron}.  Thus, by virtue of \eqref{B_LiH_L2V}, it follows that
\begin{align}\label{B_t_L2V'}
\text{
$\frac{d\B_m}{dt}$ is bounded in $L^2([0,T],V')$,
}
\end{align}
uniformly with respect to $m$.

We have shown in \eqref{u_LiV} that $\bu_m$ is uniformly bounded in $L^\infty([0,T],V)\hookrightarrow L^2([0,T],V)$, that $\frac{d\bu_m}{dt}$ is uniformly bounded in $L^{4}([0,T],H)\hookrightarrow L^{2}([0,T],V')$, that  $\B_m$ is uniformly bounded in $L^\infty([0,T],H)\cap L^2([0,T],V)$, and that $\frac{d\B_m}{dt}$ is uniformly bounded in $L^2([0,T],V')$.  Thus, by the Aubin Compactness Theorem (see, e.g., \cite{Constantin_Foias_1988},p. 68-71 or \cite{Robinson_2001,Temam_2001_Th_Num}), there exists a subsequence of $(\B_m,\bu_m)$ (which we relabel as  $(\B_m,\bu_m)$, if necessary) and elements $\B,\bu\in L^2([0,T],H)$ such that
\begin{subequations}\label{st_conv}
\begin{align}
\label{st_conv_B}
\B_m\maps\B&\quad\text{strongly in }L^2([0,T],H),
\\
\label{st_conv_u}
\bu_m\maps\bu&\quad\text{strongly in }L^2([0,T],H).
\end{align}
\end{subequations}
Furthermore, using \eqref{u_LiV}, \eqref{B_LiH_L2V}, \eqref{u_t_L4V}, \eqref{B_t_L2V'} and the Banach-Alaoglu Theorem,  we can pass to additional subsequences if necessary (which we again relabel as  $(\B_m,\bu_m)$), to show that, in fact, $\bu\in L^\infty([0,T],V)$, $\B\in L^\infty([0,T],H)\cap L^2([0,T],V)$, $\frac{d}{dt}\bu\in L^4([0,T],V)\cap L^{4/3}([0,T],H)$, $\frac{d}{dt}\B\in L^2([0,T],V')$, and
\begin{subequations}\label{wk_conv}
\begin{align}
\label{wk_conv_L2V}
\B_m\rightharpoonup\B \sand \bu_m\rightharpoonup\bu&\quad\text{weakly in }L^2([0,T],V),
\\\label{wk_conv_LiH}
\B_m\rightharpoonup\B \sand \bu_m\rightharpoonup\bu&\quad\text{weak-$*$ in }L^\infty([0,T],H),
\\\label{wk_conv_LiV}
\bu_m\rightharpoonup\bu&\quad\text{weak-$*$ in }L^\infty([0,T],V),
\\\label{wk_conv_u_t}
\frac{d}{dt}\bu_m\rightharpoonup\frac{d}{dt}\bu&\quad\text{weak-$*$ in }L^4([0,T],H)\text{ and }L^{4/3}([0,T],V),
\\\label{wk_conv_B_t}
\frac{d}{dt}\B_m\rightharpoonup\frac{d}{dt}\B&\quad\text{weak-$*$ in }L^2([0,T],V').
\end{align}
\end{subequations}
as $m\maps\infty$.

Let $k$ be fixed, and take $m\geq k$.  Let $\bw\in C^1([0,T],H_k)$ with $\bw(T)=0$ be arbitrarily given.  By taking the inner product of \eqref{irMHD_Gal} with $\bw$, integrating on $[0,T]$, and using integration by parts, we have
\begin{subequations}\label{IR_MHD_Gal_Int}
\begin{align}
&\quad \label{IR_MHD_Gal_Int_u}
-(\bu_m(0),\bw(0))-\alpha^2((\bu_m(0),\bw(0)))
\\&\notag\quad
-\int_{0}^{T}(\bu_m(t),\bw'(t))\,dt
+\alpha^2\int_{0}^{T} ((\bu_m(t),\bw'(t)))\,dt
\\&\notag =
\int_{0}^{T}(B(\B_m(t),\B_m(t),P_m\bw(t))\,dt
-\int_0^T(B(\bu_m(t),\bu_m(t)),P_m\bw(t))\,dt,
\\&\quad\label{IR_MHD_Gal_Int_B}
-(\B_m(0),\bw(0))-\int_{0}^{T}(\B'_m(t),\bw(t))\,dt
+\mu\int_{0}^{T}((\B_m(t),\bw(t)))\,dt
\\&\notag =
\int_{0}^{T}(B(\B_m(t),\bu_m(t)),P_m\bw(t))\,dt
-\int_0^T(B(\bu_m(t),\B_m(t)),P_m\bw(t))\,dt.
\end{align}
\end{subequations}
We show each of the terms in \eqref{IR_MHD_Gal_Int} converges to the appropriate limit, namely, we will find that equations \eqref{IR_MHD_Gal_Int} hold with $\set{\B_m,\bu_m,P_m}$ replaced by $\set{\B,\bu,I}$, where $I$ is the identity operator.  First, thanks to \eqref{wk_conv_L2V}, we have
\begin{align*}
   \mu\int_{0}^{T}((\B_m(t),\bw(t)))\,dt
   &\maps
   \mu\int_{0}^{T}((\B(t),\bw(t)))\,dt,\\
\int_{0}^{T}(\bu_m(t),\bw'(t))\,dt
&\maps
\int_{0}^{T}(\bu(t),\bw'(t))\,dt
,\\
\alpha^2\int_{0}^{T} ((\bu_m(t),\bw'(t)))\,dt
&\maps
\alpha^2\int_{0}^{T} ((\bu(t),\bw'(t)))\,dt
,\\
\int_{0}^{T}(\B_m(t),\bw'(t))\,dt
&\maps
\int_{0}^{T}(\B(t),\bw'(t))\,dt
.
\end{align*}

Next, we must show the convergence of the trilinear forms.  We will only show the convergence of one of them, as the rest are similar (see, e.g., \cite{Constantin_Foias_1988,Temam_2001_Th_Num} for similar arguments in the case of the Navier-Stokes equations).  Namely, we will show that
\begin{equation*}
   I(m):=\int_{0}^{T}(B(\bu_m(t),\bu_m(t)),P_m\bw(t))\,dt-\int_{0}^{T}\ip{B(\bu(t),\bu(t))}{\bw(t))}_{V'}\,dt\maps0.
\end{equation*}
To this end, let
\begin{align*}
   I_1(m)&:= \int_{0}^{T}\ip{B(\bu_m(t)-\bu(t)}{P_m\bw(t)),\bu_m(t)}_{V'}\,dt,\\
   I_2(m)&:= \int_{0}^{T}\ip{B(\bu(t),P_m\bw(t))}{\bu_m(t)-\bu(t)}_{V'}\,dt,
\end{align*}
and note that $I(m)= I_1(m)+I_2(m)$, where we have used \eqref{B:alt}.  Since $\bw\in C^1([0,T],H_k)$ and $k\geq m$, we have $P_m\bw=\bw$.  Thus, thanks to \eqref{B:236}, \eqref{B:632}, and \eqref{u_LiV}, a simple application of H\"older's inequality and \eqref{st_conv_u} shows that $I_1(m)\maps0$ and $I_2(m)\maps0$, and thus $I(m)\maps0$ for $\bw\in C^1([0,T],H_k)$.
As mentioned above, similar arguments hold for the other tri-linear terms.  Note that $\bu_m(0):=P_m\bu_0\maps \bu_0$ in $V$ and $\B_m(0):=P_m\B_0\maps \B_0$ in $H$.  Thus, passing to the limit as $m\maps\infty$ in \eqref{IR_MHD_Gal_Int}, we have for all $\bw\in C^1([0,T],H_k)$ with $\bw(T)=0$, 
\begin{subequations}\label{IR_MHD_Gal_lim}
\begin{align}
&\quad \label{IR_MHD_Gal_lim_u}
-(\bu_0,\bw(0))-\alpha^2((\bu_0,\bw(0)))
\\&\notag\quad
-\int_{0}^{T}(\bu(t),\bw'(t))\,dt
+\alpha^2\int_{0}^{T} ((\bu(t),\bw'(t)))\,dt
\\&\notag =
\int_{0}^{T}(B(\B(t),\B(t),\bw(t))\,dt
-\int_0^T(B(\bu(t),\bu(t)),\bw(t))\,dt,
\\&\quad\label{IR_MHD_Gal_lim_B}
-(\B_0,\bw(0))-\int_{0}^{T}(\B(t),\bw'(t))\,dt
+\mu\int_{0}^{T}((\B(t),\bw(t)))\,dt
\\&\notag =
\int_{0}^{T}(B(\B(t),\bu(t)),\bw(t))\,dt
-\int_0^T(B(\bu(t),\B(t)),\bw(t))\,dt.
\end{align}
\end{subequations}
Since $C^1([0,T],H_k)$ is dense in $C^1([0,T],V)$, we use \eqref{B:326} and the facts that $\bu\in L^\infty((0,T),V)$ and $\B\in L^\infty((0,T),H)\cap L^2((0,T),V)$ to show that \eqref{IR_MHD_Gal_lim} holds for all $\bw\in C^1([0,T],V)$ with $\bw(T)=0$.  In particular, \eqref{MHD_V_proj_mom} and \eqref{MHD_V_proj_mag} are satisfied by $(\bu,\B)$ in the sense of $V'$, where the time derivatives are taken in the sense of distributions on $(0,T)$.  Allowing \eqref{MHD_V_proj_mom} and \eqref{MHD_V_proj_mag} to act on $\bw$ and comparing with \eqref{IR_MHD_Gal_lim}, one finds that $\bu(0)+\alpha^2A\bu(0)=\bu_0+\alpha^2A\bu_0$ and $\B(0)=\B_0$ (see, e.g., \cite[p. 195]{Temam_2001_Th_Num}).  By inverting $I+\alpha^2A$, we then have $\bu(0)=\bu_0$.

We must now show that $\bu$ and $\B$ satisfy the requirements for continuity in time in Definition \eqref{def:wk_st}.  Taking the action of \eqref{IR_MHD_Gal_lim_B} with an arbitrary $\bv\in \mathcal{V}$ and integrating in time, we obtain, for $a.e.$ $t_0,t_1\in[0,T]$,
\begin{align}
&\quad\label{IR_MHD_cts_B}
(\B(t_1)-B(t_0),\bv)
+\mu\int_{t_0}^{t_1}((\B(t),\bv))\,dt
\\&\notag =
\int_{t_0}^{t_1}\ip{B(\B(t),\bu(t))}{\bv}_{V'}\,dt
-\int_{t_0}^{t_1}\ip{B(\bu(t),\B(t))}{\bv}_{V'}\,dt.
\end{align}
Since the integrands are in $L^1((0,T))$, we see from \eqref{IR_MHD_cts_B} that $\B\in C_w([0,T],\mathcal{V})$ by sending $t_1\maps t_0$.  By the density of $\mathcal{V}$ in $H$ and the fact that $\B\in L^\infty([0,T],H)$, a simple application of the triangle inequality shows that $\B\in C_w([0,T],H)$.  Next, by \eqref{u_t_L4V}, we have $\frac{d}{dt}\bu\in L^4([0,T],H)\hookrightarrow L^2([0,T],V')$.  Since we also have $\bu\in L^2([0,T],V)$, it follows that $\bu\in C([0,T],H)$ (see, e.g., \cite[Corollary 7.3]{Robinson_2001}).
We have now shown that $\bu$ and $\B$ satisfy all the requisite conditions of Definition \ref{def:wk_st}, and therefore we have proven the global existence of weak solutions.
\end{proof}

\subsection{Existence of Strong Solutions}

Next, we prove the existence of strong solutions, assuming $\B_0\in V$.  Uniqueness of strong solutions among the class of weak solutions, and also higher-order regularity will be proven in the next two sections.

\begin{theorem}\label{thm:strong}
Let $\bu_0,\B_0\in V$. Then for every $T>0$, \eqref{MHD_V_proj} has a strong solution $(\bu,\B)$ on $[0,T]$.
\end{theorem}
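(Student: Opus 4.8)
The plan is to upgrade the weak solution produced in Theorem \ref{thm:weak} to a strong one by deriving a second, higher-order \emph{a priori} estimate on the Galerkin approximations $(\bu_m,\B_m)$ from \eqref{irMHD_Gal}, and then passing to the limit. Since we now assume $\B_0\in V$, we have $\|\B_m(0)\|=\|P_m\B_0\|\le\|\B_0\|$, so we may test \eqref{irMHD_Gal_B} against $A\B_m$ (legitimate since $A\B_m\in H_m$ and $P_m$ is self-adjoint). The decisive point is that the basic energy estimate \eqref{u_H1_Gron}, which is made possible by the Voigt term, already controls $\bu_m$ in $L^\infty([0,T],V)$ uniformly in $m$; thus $\|\bu_m\|$ may be treated as a bounded coefficient, and the genuine magnetic dissipation $\mu\,|A\B_m|^2$ (recall $\mu>0$) is available to absorb the nonlinear terms.

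Concretely, testing \eqref{irMHD_Gal_B} with $A\B_m$ gives
\[
\tfrac12\tfrac{d}{dt}\|\B_m\|^2+\mu|A\B_m|^2=(B(\B_m,\bu_m),A\B_m)-(B(\bu_m,\B_m),A\B_m).
\]
I would bound the two terms on the right using \eqref{B:i22} and \eqref{B:632}, respectively, each by $C\|\bu_m\|\,\|\B_m\|^{1/2}|A\B_m|^{3/2}$, and then apply Young's inequality to absorb $\tfrac{\mu}{2}|A\B_m|^2$ into the left-hand side, leaving a remainder $C_\mu\|\bu_m\|^4\|\B_m\|^2$. Invoking the $L^\infty([0,T],V)$ bound on $\bu_m$ from \eqref{u_H1_Gron} and Gr\"onwall's inequality then yields that $\B_m$ is bounded in $L^\infty([0,T],V)$, and integrating in time gives a uniform bound in $L^2([0,T],\mathcal{D}(A))$, both independent of $m$.

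With $\B_m$ controlled in $L^2([0,T],\mathcal{D}(A))$, the equation \eqref{irMHD_Gal_B} together with the bounds \eqref{B:i22} and \eqref{B:632} (which estimate the two nonlinear terms in the $H$-norm by $C\|\bu_m\|\,\|\B_m\|^{1/2}|A\B_m|^{1/2}$) shows that $\frac{d\B_m}{dt}$ is uniformly bounded in $L^2([0,T],H)$. Passing to a further subsequence and using the Banach--Alaoglu theorem as in the proof of Theorem \ref{thm:weak}, the limit $(\bu,\B)$---which by the strong $L^2([0,T],H)$ convergence \eqref{st_conv} is exactly the weak solution already obtained---inherits the regularity $\B\in L^\infty([0,T],V)\cap L^2([0,T],\mathcal{D}(A))$ and $\frac{d\B}{dt}\in L^2([0,T],H)$ by weak lower semicontinuity. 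The continuity $\B\in C([0,T],V)$ then follows from the standard interpolation/continuity result for functions lying in $L^2(\mathcal{D}(A))$ with time derivative in $L^2(H)$ (see, e.g., \cite{Temam_2001_Th_Num,Robinson_2001}).

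Finally, for $\frac{d\bu}{dt}\in C([0,T],V)$ I would not appeal to compactness but instead use the functional form \eqref{MHD_V_proj_mom} directly: since $(I+\alpha^2A)^{-1}$ maps $V'$ boundedly into $V$, and since $\bu,\B\in C([0,T],V)$ (the former from Definition \ref{def:wk_st}, the latter just established) make $B(\bu,\bu)$ and $B(\B,\B)$ continuous into $V'$ by Lemma \ref{B:prop}, we obtain $\frac{d\bu}{dt}=(I+\alpha^2A)^{-1}\big(B(\B,\B)-B(\bu,\bu)\big)\in C([0,T],V)$. I expect the main obstacle to be the higher-order estimate of the second step: it is precisely here that both structural features of the model are indispensable, since the Voigt regularization is what promotes $\bu_m$ to $L^\infty([0,T],V)$, while the resistivity $\mu>0$ is what furnishes the dissipation needed to close the $\mathcal{D}(A)$ bound on $\B_m$; without either ingredient, the argument breaks down.
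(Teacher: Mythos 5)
Your proposal is correct and follows essentially the same route as the paper's proof: testing the Galerkin magnetic equation with $A\B_m$, using the uniform $L^\infty([0,T],V)$ bound on $\bu_m$ from the Voigt energy estimate together with Young's inequality to absorb into the dissipation $\mu|A\B_m|^2$, deducing uniform bounds for $\B_m$ in $L^\infty([0,T],V)\cap L^2([0,T],\mathcal{D}(A))$ and for $\frac{d\B_m}{dt}$ in $L^2([0,T],H)$, passing to the limit, and recovering $\frac{d\bu}{dt}\in C([0,T],V)$ by inverting $(I+\alpha^2A)$ on the momentum equation. The only cosmetic differences (Gr\"onwall versus direct integration using the earlier $L^2([0,T],V)$ bound on $\B_m$, and relying on weak-$*$ lower semicontinuity rather than a second application of the Aubin compactness theorem) do not affect the argument.
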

\begin{proof}
Taking the inner product of \eqref{irMHD_Gal_B} with $A \B_m$, we use \eqref{B:326} and \eqref{B:i22} to obtain,
\begin{align}
&\notag\quad
\frac{1}{2}\frac{d}{dt}\|\B_m\|^2+\mu|A\B_m|^2
\\\notag &=
-(B(\B_m, \bu_m), A\B_m)+(B(\bu_m, \B_m), A\B_m)
\\\notag &\leq
C\|\B_m\|^{1/2}|A\B_m|^{1/2}\|\bu_m\||A\B_m|
+C\|\bu_m\|\|\B_m\|^{1/2}|A\B_m|^{1/2}|A\B_m|
\\\label{B_H1} &\leq
K_\alpha^1\alpha^{-1}\|\B_m\|^{1/2}|A\B_m|^{3/2},
\end{align}
since $\|\bu_m\|$ is uniformly bounded by \eqref{u_H1_Gron}.  Due to Young's inequality, we have
\begin{equation}\label{B_young}
\alpha^{-1}K_\alpha^1\|\B_m\|^{1/2}|AB_m|^{3/2}\leq
K_{\alpha,\mu}^2\|\B_m\|^{2}+\frac{\mu}{2}|A\B_m|^{2},
\end{equation}
where $K_{\alpha,\mu}^2:=C(\alpha^{-1}K_\alpha^1)^4\mu^{-3}$.  Therefore, combining \eqref{B_H1} and \eqref{B_young}, we have
\begin{align}\label{Bm_diff}
\frac{1}{2}\frac{d}{dt}\|\B_m\|^2+\frac{1}{2}\mu|A\B_m|^2
\leq K_{\alpha,\mu}^2\|\B_m\|^2.
\end{align}
Integrating \eqref{Bm_diff} on $[0,t]$ gives
   \begin{align}
   \|\B_m(t)\|^2+\mu\int_0^t|A\B_m(s)|^2\,ds
   &\leq \notag
   \|\B_m(0)\|^2+2K_{\alpha,\mu}^2\int_0^t\|\B_m(s)\|^2\,ds
   \\&\leq\label{Bm_L2H2}
   \|\B_0\|^2+K_{\alpha,\mu}^2\frac{K_\alpha^1}{\mu}:=K_{\alpha,\mu}^3
   \end{align}
due to \eqref{u_H1_Gron}.  Since we are now assuming $\B_0\in V$, \eqref{Bm_L2H2} implies
\begin{align}\label{B_LiV_L2H2}
\text{
$\B_m$ is bounded in $L^\infty([0,T],V)\cap L^2([0,T],\mathcal{D}(A))$.}
\end{align}
uniformly with respect to $m$.
Furthermore, recalling \eqref{u_t_Est}, the improved bound \eqref{B_LiV_L2H2} now yields
\begin{align}\label{u_t_LiV}
\frac{d\bu_m}{dt}\text{ is bounded in }L^{\infty}([0,T],V).
\end{align}
uniformly with respect to $m$.

Next, we estimate $\frac{d\B_m}{dt}$.  From \eqref{B:326}, \eqref{Agmon1/2}, \eqref{poincare}, \eqref{MHD_V_proj_mag}, and \eqref{u_H1_Gron}, we have,
\begin{align}
\left|\frac{d\B_m}{dt}\right|
&\notag \leq
|B(\B_m,\bu_m)|+|B(\bu_m,\B_m)| + \mu|A\B_m|
\\\notag &=
\sup_{|\bw|=1}\pair{B(\B_m,\bu_m)}{\bw}
+ \sup_{|\bw|=1}\pair{B(\bu_m,\B_m)}{\bw}
+\mu|A\B_m|
\\\notag &\leq
\sup_{|\bw|=1}\|\B_m\|_{L^\infty}|\nabla\bu_m||\bw|
+ \sup_{|\bw|=1}|\bu_m|^{1/2}\|\bu_m\|^{1/2}|A\B_m||\bw|
+ \mu|A\B_m|
\\\notag &\leq
C\|\B_m\|^{1/2}|A\B_m|^{1/2}\|\bu_m\|
+ \mu|A\B_m|
\\\notag&\leq
CK_\alpha^1(\alpha^{-1} + \mu)|A\B_m|.
\end{align}
Thanks to this and \eqref{B_LiH_L2V}
\begin{align}
\label{B_t_L2H}
\frac{d\B_m}{dt}\text{ is  bounded in }L^{2}([0,T],H),
\end{align}
uniformly with respect to $m$.  Now, from the proof of Theorem \ref{thm:weak}, we already know that there exists a weak solution $(\bu,\B)$ of \eqref{MHD_V_proj} such that  $\B_{m'}\maps\B$ in $L^\infty([0,T],H)$ and $L^2([0,T],V)$ for some subsequence $\set{\B_{m'}}$.  Thanks to \eqref{B_LiV_L2H2} and \eqref{B_t_L2H}, we may apply the Aubin Compactness Theorem (see, e.g., \cite{Constantin_Foias_1988},p. 68-71 or \cite{Robinson_2001,Temam_2001_Th_Num}) to extract a subsequence (relabeled as  $(\bu_m,B_m)$) such that
\begin{align}
   \B_m\maps \B \text{ strongly in }L^2([0,T],V).
\end{align}

Using  \eqref{B_LiV_L2H2}, the Banach-Alaoglu Theorem, and the uniqueness of limits, we may pass to additional subsequences if necessary to show that $\B\in L^\infty((0,T),V)\cap L^2((0,T),\mathcal{D}(A))$. It is easy to see from \eqref{B_t_L2H} that  $\frac{d\B}{dt}\in L^2((0,T),H)$.   Thus, we must have $\B\in C([0,T],V)$ (see, e.g., \cite[Corollary 7.3]{Robinson_2001}).   Finally, since $\bu,\B\in C([0,T],V)$, it follows easily that the right-hand side of \eqref{MHD_V_proj_mom} is in $C([0,T],V')$.  Inverting $(I+\alpha^2A)$ shows that $\frac{d\bu}{dt}\in C([0,T],V)$.  Therefore, we have shown the existence of a strong solution to \eqref{MHD_V_proj}.
\end{proof}

\subsection{Uniqueness of Strong Solutions and Their Continuous Dependence On Initial Data}
In this section, we prove the uniqueness of strong solutions among the class of weak solutions.  As mentioned in the introduction, the uniqueness of (strong) solutions is stated in \cite{Catania_Secchi_2009}, but no proof is given.  We begin with a lemma, which is reminiscent of the Lions-Magenes Lemma (see, e.g., \cite{Lions_Magenes_1972_II,Temam_2001_Th_Num}).  We use similar ideas to those in \cite[Lemma 1.2, p. 176]{Temam_2001_Th_Num}.

\begin{lemma}\label{lemma:lions_lemma_almost}
   Let $\bv\in C((0,T),H)$ and $\frac{d}{dt}\bv\in L^p((0,T),H)$ for some $p\in[1,\infty]$.  Then the following inequality holds in the in distribution sense on $(0,T)$.
   \begin{align}\label{lions_lemma_almost}
      \frac{d}{dt}|\bv|^2 = 2\pair{\frac{d}{dt}\bv}{\bv}.
   \end{align}
   Moreover, $|\bv|^2$ is absolutely continuous.
\end{lemma}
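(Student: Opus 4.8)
The plan is to regularize $\bv$ in time and reduce the identity to the classical product rule for smooth $H$-valued curves. Since both conclusions---the absolute continuity of $|\bv|^2$ and the identity \eqref{lions_lemma_almost}---are local in $t$, I would fix a compact subinterval $[a,b]\subset(0,T)$ and prove everything there; the statement on all of $(0,T)$ then follows because $[a,b]$ is arbitrary. Let $\rho_\epsilon$ be a standard nonnegative mollifier supported in $(-\epsilon,\epsilon)$, and set $\bv_\epsilon:=\rho_\epsilon*\bv$. For $\epsilon$ small enough that $[a-\epsilon,b+\epsilon]\subset(0,T)$, this convolution is well defined on $[a,b]$ and belongs to $C^\infty([a,b],H)$. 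Because $\frac{d}{dt}\bv$ is by hypothesis the distributional $H$-valued derivative of $\bv$, mollification commutes with differentiation, so $\frac{d}{dt}\bv_\epsilon=\rho_\epsilon*\frac{d}{dt}\bv$.

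For the smooth curve $\bv_\epsilon$ the product rule holds classically, giving $\frac{d}{dt}|\bv_\epsilon|^2=2\pair{\frac{d}{dt}\bv_\epsilon}{\bv_\epsilon}$, which integrates to
\begin{equation}\label{moll_FTC}
|\bv_\epsilon(t)|^2-|\bv_\epsilon(s)|^2=2\int_s^t\pair{\tfrac{d}{dt}\bv_\epsilon(\tau)}{\bv_\epsilon(\tau)}\,d\tau,
\qquad s,t\in[a,b].
\end{equation}
The next step is to pass to the limit $\epsilon\to0$ in \eqref{moll_FTC}. Since $\bv\in C((0,T),H)$ is uniformly continuous on $[a-\epsilon_0,b+\epsilon_0]$ for some $\epsilon_0>0$, standard mollifier theory gives $\bv_\epsilon\to\bv$ uniformly on $[a,b]$ in $H$; in particular $\sup_{[a,b]}|\bv_\epsilon|$ stays bounded and the left-hand side of \eqref{moll_FTC} converges to $|\bv(t)|^2-|\bv(s)|^2$. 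For the right-hand side I would use $\frac{d}{dt}\bv\in L^p((0,T),H)\hookrightarrow L^1([a,b],H)$, whence $\frac{d}{dt}\bv_\epsilon\to\frac{d}{dt}\bv$ in $L^1([a,b],H)$. Combining these via Cauchy--Schwarz in $H$ and the triangle inequality,
\begin{align*}
\int_a^b\left|\pair{\tfrac{d}{dt}\bv_\epsilon}{\bv_\epsilon}-\pair{\tfrac{d}{dt}\bv}{\bv}\right|\,d\tau
&\leq\Big(\sup_{[a,b]}|\bv_\epsilon|\Big)\int_a^b\left|\tfrac{d}{dt}\bv_\epsilon-\tfrac{d}{dt}\bv\right|\,d\tau\\
&\quad+\Big(\sup_{[a,b]}|\bv_\epsilon-\bv|\Big)\int_a^b\left|\tfrac{d}{dt}\bv\right|\,d\tau,
\end{align*}
and both terms tend to $0$. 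Thus \eqref{moll_FTC} passes to the limit to yield
\begin{equation}\label{v_FTC}
|\bv(t)|^2-|\bv(s)|^2=2\int_s^t\pair{\tfrac{d}{dt}\bv(\tau)}{\bv(\tau)}\,d\tau,
\qquad s,t\in[a,b].
\end{equation}

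Finally, the integrand $\tau\mapsto\pair{\frac{d}{dt}\bv(\tau)}{\bv(\tau)}$ lies in $L^1([a,b])$, being the product of an $L^1$ function and a bounded continuous function, so \eqref{v_FTC} exhibits $|\bv|^2$ as the indefinite integral of an $L^1$ function on $[a,b]$; this is exactly absolute continuity on $[a,b]$, and differentiating \eqref{v_FTC} recovers \eqref{lions_lemma_almost} a.e., hence in the distributional sense on $(0,T)$. The one point that demands care is the endpoint case $p=\infty$, where mollification need not converge in $L^\infty$: this causes no difficulty because I only invoke convergence of $\frac{d}{dt}\bv_\epsilon$ in $L^1([a,b],H)$, which holds on the compact interval for every $p\in[1,\infty]$. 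This $L^1$-convergence of the derivatives, paired with the uniform $H$-convergence $\bv_\epsilon\to\bv$ furnished by the assumed continuity of $\bv$, is the crux of the limiting argument and the main obstacle to handle cleanly.
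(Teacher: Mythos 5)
Your proof is correct and takes essentially the same approach as the paper's: mollify in time, apply the classical product rule to the smooth approximants, and pass to the limit using the same splitting $\pair{\tfrac{d}{dt}\bv_\epsilon}{\bv_\epsilon}-\pair{\tfrac{d}{dt}\bv}{\bv}=\pair{\tfrac{d}{dt}\bv_\epsilon-\tfrac{d}{dt}\bv}{\bv_\epsilon}+\pair{\tfrac{d}{dt}\bv}{\bv_\epsilon-\bv}$, controlled by local uniform convergence of $\bv_\epsilon$ and $L^1_{\mathrm{loc}}$ convergence of the derivatives. The only cosmetic difference is that you pass to the limit in the integrated (fundamental theorem of calculus) form and then differentiate, whereas the paper passes to the limit directly in the distributional identity; the two arguments are the same in substance.
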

\begin{proof}
   First, we note that \eqref{lions_lemma_almost} makes sense, due to the fact that $t\mapsto |\bv(t)|^2$ and $t\mapsto (\frac{d}{dt}\bv(t),\bv(t))$ are both elements of $L^1([0,T])$.  Let us write $\widetilde{\bv}$ for the function which is equal to $\bv$ on $[0,T]$ and equal to $0$ on $\nR\setminus[0,T]$.
   By a standard mollification process, we can find a sequence of functions $\set{\bv_k}_{k\in\nN}$ in $C^\infty([0,T])$ such that $\bv_k\maps\bv$ in $L^\infty_{\text{loc}}((0,T),H)$ and $\frac{d}{dt}\bv_k\maps \frac{d}{dt}\bv$ in $L^p_{\text{loc}}((0,T),H)$.  Clearly, equality \eqref{lions_lemma_almost} holds for $\bv_k$, and also $|\bv_k|^2\maps |\bv|^2$ in $L^1_{\text{loc}}((0,T))$.  We also have for $0<t_1<t_2<T$,
   \begin{align*}
      &\int_{t_1}^{t_2}\abs{\pair{\tfrac{d}{dt}\bv_k}{\bv_k}-\pair{\tfrac{d}{dt}\bv}{\bv}}\,dt
      \leq
         \int_{t_1}^{t_2}\abs{\pair{\tfrac{d}{dt}\bv_k-\tfrac{d}{dt}\bv}{\bv_k}}
            +\abs{\pair{\tfrac{d}{dt}\bv}{\bv_k-\bv}}\,dt
      \\&\leq
         C\|\tfrac{d}{dt}\bv_k-\tfrac{d}{dt}\bv\|_{L^p((t_1,t_2))}\|\bv_k\|_{L^\infty((t_1,t_2))}
            +\|\tfrac{d}{dt}\bv\|_{L^p((t_1,t_2))}\|\bv_k-\bv\|_{L^\infty((t_1,t_2))}.
   \end{align*}
   Thus, $\pair{\tfrac{d}{dt}\bv_k}{\bv_k}\maps \pair{\tfrac{d}{dt}\bv}{\bv}$ in $L^1_{\text{loc}}((0,T))$ as well, and so \eqref{lions_lemma_almost} holds in the scalar distribution sense.  Furthermore, since the right-hand side of \eqref{lions_lemma_almost} is integrable, we have $|\bv|^2\in W^{1,1}((0,T))$, and so $|\bv|^2$ is absolutely continuous in time.
\end{proof}

\begin{theorem}[Uniqueness and Continuous Dependence On Initial Data]\label{thm:unique}
Let \\$(\bu^1,\B^1)$ be a strong solution to \eqref{MHD_V} with initial data $\bu_0^1$, $\B_0^1\in V$ and let $(\bu^2,\B^2)$ be a weak solution with initial data $\bu_0^2$, $\B_0^2\in V$.  Let us write $\delta\bv :=\bv^1-\bv^2$ for two arbitrary, consecutively labeled vectors $\bv^1$ and $\bv^2$.  We have,
\begin{align}\label{uniqueness_ineq}
&\quad
|\delta\bu (t)|^2+\alpha^2\|\delta\bu (t)\|^2+ |\delta\B (t)|^2
+\mu\int_0^t\|\delta\B (s)\|^2e^{K^4_{\alpha,\mu}(t-s)}\,ds
\\&\leq\notag
\pnt{|\delta\bu _0|^2+\alpha^2\|\delta\bu _0\|^2+ |\delta\B _0|^2} e^{K_{\alpha,\mu}^4t}
\end{align}
$K_{\alpha,\mu}^4=K_{\alpha,\mu}^4(K_{\alpha,\mu}^1, K_{\alpha,\mu}^3, \alpha,\mu)$ is a positive constant.  In particular, if $\bu_0^1\equiv \bu_0^2$ and $\B_0^1\equiv \B_0^2$ in the sense of $H$, then $\bu^1\equiv \bu^2$ and $\B^1\equiv \B^2$ in the sense of $L^1([0,T],H)$.
\end{theorem}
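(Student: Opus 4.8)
The plan is to establish an energy differential inequality for the combined quantity
\[
E(t):=|\delta\bu(t)|^2+\alpha^2\|\delta\bu(t)\|^2+|\delta\B(t)|^2
\]
and then close it with Grönwall's inequality. First I would subtract the two copies of \eqref{MHD_V_proj_mom} and the two copies of \eqref{MHD_V_proj_mag}, and use the bilinearity of $B$ to re-expand each nonlinear difference in terms of $\delta\bu$ and $\delta\B$; for instance $B(\B^1,\B^1)-B(\B^2,\B^2)=B(\delta\B,\B^1)+B(\B^2,\delta\B)$, and analogously for the velocity self-interaction and the two mixed couplings. This produces the difference system
\begin{align}
\frac{d}{dt}\pnt{\delta\bu+\alpha^2 A\delta\bu}&=B(\delta\B,\B^1)+B(\B^2,\delta\B)-B(\delta\bu,\bu^1)-B(\bu^2,\delta\bu),\\
\frac{d}{dt}\delta\B+\mu A\delta\B&=B(\delta\B,\bu^1)+B(\B^2,\delta\bu)-B(\delta\bu,\B^1)-B(\bu^2,\delta\B).
\end{align}

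Next I would pair the first equation with $\delta\bu$ and the second with $\delta\B$ and add. The left-hand sides combine to $\tfrac12\frac{d}{dt}E(t)+\mu\|\delta\B\|^2$, using $((\tfrac{d}{dt}\delta\bu,\delta\bu))=\tfrac12\frac{d}{dt}\|\delta\bu\|^2$ for the Voigt term. On the right, the three terms $(B(\bu^2,\delta\bu),\delta\bu)$, $(B(\delta\B,\bu^1),\delta\B)$, and $(B(\bu^2,\delta\B),\delta\B)$ vanish by the orthogonality in \eqref{B:alt}, and the two remaining terms built on $\B^2$ cancel one another, since \eqref{B:alt} gives $(B(\B^2,\delta\B),\delta\bu)=-(B(\B^2,\delta\bu),\delta\B)$. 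What survives is
\[
(B(\delta\B,\B^1),\delta\bu)-(B(\delta\bu,\B^1),\delta\B)-(B(\delta\bu,\bu^1),\delta\bu),
\]
and this is precisely the structural reason the second solution may be merely weak: every factor that must carry regularity is the \emph{strong} solution $(\bu^1,\B^1)$.

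Each surviving term I would bound with \eqref{B:326} and Young's inequality, using that $\|\bu^1\|$ and $\|\B^1\|$ are bounded by the a priori constants $K_\alpha^1$ and $(K_{\alpha,\mu}^3)^{1/2}$ coming from \eqref{u_H1_Gron} and \eqref{Bm_L2H2}. For example $|(B(\delta\bu,\bu^1),\delta\bu)|\le C|\delta\bu|^{1/2}\|\delta\bu\|^{3/2}\|\bu^1\|\le K(|\delta\bu|^2+\alpha^2\|\delta\bu\|^2)$, while the two terms involving $\B^1$ are each controlled by $\tfrac{\mu}{4}\|\delta\B\|^2+K\,E(t)$, so that the full $\|\delta\B\|^2$ contribution is absorbed into the dissipation $\mu\|\delta\B\|^2$ on the left. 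The result is
\begin{align}
\frac{d}{dt}E(t)+\mu\|\delta\B\|^2\le K_{\alpha,\mu}^4\,E(t).
\end{align}
Multiplying by the integrating factor $e^{-K_{\alpha,\mu}^4 t}$ and integrating over $[0,t]$ gives exactly \eqref{uniqueness_ineq}; taking $\delta\bu_0=0$ and $\delta\B_0=0$ then forces $E\equiv0$, which is the asserted uniqueness and continuous dependence.

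The main obstacle is not this formal computation but its rigorous justification, because $\delta\B$ is the difference of a strong and a merely weak solution, so the pairings above live in duality and the energy identities are not automatic. For the velocity, the Voigt term supplies the extra regularity $\frac{d}{dt}\delta\bu\in L^{4/3}([0,T],V)$, so I would apply Lemma \ref{lemma:lions_lemma_almost} both to $\delta\bu$ and to $A^{1/2}\delta\bu$ (legitimate since $\delta\bu\in C([0,T],V)$) to obtain $\frac{d}{dt}|\delta\bu|^2=2(\tfrac{d}{dt}\delta\bu,\delta\bu)$ and $\frac{d}{dt}\|\delta\bu\|^2=2((\tfrac{d}{dt}\delta\bu,\delta\bu))$ as genuine identities for a.e.\ $t$. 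For the magnetic field only $\frac{d}{dt}\delta\B\in L^2([0,T],V')$ and $\delta\B\in L^2([0,T],V)$ are available, so there I would invoke instead the classical Lions--Magenes lemma for the Gelfand triple $V\hookrightarrow H\hookrightarrow V'$, which yields $\delta\B\in C([0,T],H)$ and $\frac{d}{dt}|\delta\B|^2=2\ip{\tfrac{d}{dt}\delta\B}{\delta\B}_{V'}$. With these identities secured, every pairing above is a bona fide $H$–$H$ or $V'$–$V$ action, the cancellations hold for a.e.\ $t$, and the differential inequality is rigorous.
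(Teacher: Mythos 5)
Your overall strategy is exactly the paper's: form the same difference system, pair it with $(\delta\bu,\delta\B)$, exploit the cancellation of the two $\B^2$-terms via the antisymmetry in \eqref{B:alt}, justify the velocity energy identity with Lemma \ref{lemma:lions_lemma_almost} (using $\frac{d}{dt}\delta\bu\in L^{4/3}([0,T],V)$) and the magnetic one with the classical Lions--Magenes lemma, and close with Gr\"onwall. The only structural deviation on that side---applying Lemma \ref{lemma:lions_lemma_almost} separately to $\delta\bu$ and to $A^{1/2}\delta\bu$, rather than once to $(I+\alpha^2A)^{1/2}\delta\bu$ as the paper does---is cosmetic.

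There is, however, one genuine error: you claim that $(B(\delta\B,\bu^1),\delta\B)$ vanishes by \eqref{B:alt}. It does not. The orthogonality in \eqref{B:alt} is $\ip{B(\bu,\bv)}{\bv}_{V'}=0$, i.e., the trilinear form vanishes when its \emph{second} and third arguments coincide, $b(\bw,\bv,\bv)=0$; it says nothing about $b(\bv,\bw,\bv)$, where the repeated vector sits in the first and third slots. Accordingly, the paper's energy identity retains this term: its right-hand side is
\[
b(\delta\B,\B^1,\delta\bu)-b(\delta\bu,\bu^1,\delta\bu)+b(\delta\B,\bu^1,\delta\B)-b(\delta\bu,\B^1,\delta\B),
\]
four surviving terms, not your three. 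The mistake is repairable within your own framework, because the dropped term has precisely the structure you emphasize elsewhere---the strong solution $\bu^1$ occupies the middle slot---so \eqref{B:326} gives
\[
|b(\delta\B,\bu^1,\delta\B)|\leq C|\delta\B|^{1/2}\|\delta\B\|^{3/2}\|\bu^1\|
\leq \frac{\mu}{4}\|\delta\B\|^2+C_\mu\|\bu^1\|^4|\delta\B|^2,
\]
and $\|\bu^1\|$ is bounded by the a priori estimate \eqref{u_H1_Gron}; the quadratic piece is absorbed into $E(t)$ and the gradient piece into the dissipation, exactly as for your other $\B^1$-terms. With this term restored and estimated, your differential inequality and the Gr\"onwall step go through verbatim and yield \eqref{uniqueness_ineq}.
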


\begin{proof}
  Let us denote $\delta\bu :=\bu^1-\bu^2$ and $\delta\B :=\B^1-\B^2$.  Subtracting, we get an equation for $\delta\bu$:
\begin{equation}\label{B1B2eq}
(I+\alpha^2A)\frac{d\delta\bu }{dt} = (\delta\B \cdot\nabla)\B^1+ (\B^2\cdot\nabla)\delta\B
-(\delta\bu \cdot\nabla)\bu^1-(\bu^2\cdot\nabla)\delta\bu.
\end{equation}
Applying $(I+\alpha^2A)^{-1/2}$ to both sides of \eqref{B1B2eq} we obtain
\begin{align}
&\quad\label{B1B2halved}
(I+\alpha^2A)^{1/2}\frac{d\delta\bu }{dt}
\\&=\notag
(I+\alpha^2A)^{-1/2}\pnt{(\delta\B \cdot\nabla)\B^1+ (\B^2\cdot\nabla)\delta\B
-(\delta\bu \cdot\nabla)\bu_1-(\bu_2\cdot\nabla)\delta\bu }.
\end{align}
Since $\delta\bu\in C([0,T],V)$ and $ \frac{d\delta\bu}{dt}\in L^{4/3}([0,T],V)$, and therefore $(I+\alpha^2A)^{1/2}\frac{d\delta\bu}{dt}\in L^{4/3}([0,T],H)$, we may take the inner product of \eqref{B1B2halved} with $(I+\alpha^2A)^{1/2}\delta\bu \in C([0,T],H)$.
Thanks to Lemma \ref{lemma:lions_lemma_almost},
we have $\pair{(I+\alpha^2A)^{1/2}\frac{d}{dt}\delta\bu}{(I+\alpha^2A)^{1/2}\delta\bu}=\frac{1}{2}\frac{d}{dt}\abs{(I+\alpha^2A)^{1/2}\delta\bu}^2$.  After using \eqref{B:alt}, we arrive at
\begin{equation}\label{u1-u2}
\frac{1}{2}\frac{d}{dt}\abs{(I+\alpha^2A)^{1/2}\delta\bu}^2
=b(\delta\B ,\B^1,\delta\bu )+b(\B^2,\delta\B ,\delta\bu )-b(\delta\bu ,\bu^1,\delta\bu).
\end{equation}
Arguing in a similar way to the standard Navier-Stokes theory (in particular, using the Lions-Magenes Lemma \cite{Lions_Magenes_1972_II,Temam_2001_Th_Num} rather than Lemma \ref{lemma:lions_lemma_almost}, see, e.g., \cite{Constantin_Foias_1988,Temam_2001_Th_Num,Robinson_2001}), we derive the following equation for $\delta\B$.
\begin{equation}\label{B1-B2}
\frac{1}{2}\frac{d}{dt}|\delta\B |^2 + \mu\|\delta\B \|^2
=
b(\delta\B ,\bu^1,\delta\B )+b(\B^2,\delta\bu ,\delta\B )-b(\delta\bu ,\B^1,\delta\B).
\end{equation}
Since $b(\B^2,\delta\bu ,\delta\B ) = -b(\B^2,\delta\B ,\delta\bu )$ by \eqref{B:alt}, we may add equations \eqref{u1-u2} and \eqref{B1-B2} to obtain
{\allowdisplaybreaks
\begin{align*}
&\quad
 \frac{1}{2}\frac{d}{dt}
\pnt{\abs{(I+\alpha^2A)^{1/2}\delta\bu}^2+ |\delta\B |^2}+ \mu\|\delta\B \|^2
\\&=
b(\delta\B ,\B^1,\delta\bu )-b(\delta\bu ,\bu^1,\delta\bu)+b(\delta\B ,\bu^1,\delta\B )-b(\delta\bu ,B^1,\delta\B)
 \\&\leq 
 C\|\delta\B  \|\|\B^1\|\| \delta\bu \|
+C|\delta\bu|^{1/2}\|\delta\bu  \|^{3/2}\|\bu^1\|
\\&\quad
+C|\delta\B |^{1/2}\| \delta\B\|^{3/2}\|\bu^1\|
+C\|\delta\bu  \|\|\B^1\||\delta\B|^{1/2}\| \delta\B \|^{1/2}
\\&\leq 
 K_{\alpha,\mu}^3\|\delta\B  \|\| \delta\bu \|
+K_{\alpha,\mu}^1|\delta\bu|^{1/2}\|\delta\bu  \|^{3/2}
\\&\quad
+K_{\alpha,\mu}^1|\delta\B |^{1/2}\| \delta\B\|^{3/2}
+K_{\alpha,\mu}^3\|\delta\bu  \||\delta\B|^{1/2}\| \delta\B \|^{1/2}
\\&\leq 
\frac{1}{2}K_{\alpha,\mu}^4\pnt{|\delta\bu |^2+\alpha^2\|\delta\bu \|^2+ |\delta\B |^2}+  \frac{\mu}{2}\|\delta\B \|^2,
\end{align*}
}
where $K_{\alpha,\mu}^4=K_{\alpha,\mu}^4(K_{\alpha,\mu}^1, K_{\alpha,\mu}^3, \alpha,\mu)$ is a positive constant.  Here, we have used Young's inequality several times, as well as \eqref{B:326}, \eqref{B:623}, \eqref{poincare}, \eqref{u_H1_Gron}, and \eqref{Bm_L2H2} (which hold in the limit as $m\maps\infty$ for $\B^i, \bu^i$ by properties of weak convergence).  Therefore,
\begin{equation*}
\frac{d}{dt}
\pnt{\abs{(I+\alpha^2A)^{1/2}\delta\bu}^2+ |\delta\B |^2}+ \mu\|\delta\B \|^2
\leq
K_{\alpha,\mu}^4\pnt{|\delta\bu |^2+\alpha^2\|\delta\bu \|^2+ |\delta\B |^2}.
\end{equation*}
Using Gr\"onwall's inequality together with the identity $\abs{(I+\alpha^2A)^{1/2}\delta\bu}^2=|\delta\bu|^2+\alpha^2\|\delta\bu\|^2$ now implies \eqref{uniqueness_ineq}.
\end{proof}

\section{Higher-Order Regularity}\label{sec:Reg}
We now prove that the solutions to \eqref{MHD_V_proj} (equivalently \eqref{MHD_V}) enjoy $H^s$ regularity whenever $\bu_0, \B_0\in H^s\cap V$ for $s\geq1$.  We note that the calculations in this section are done formally, but can be made rigorous by proving the results at the Galerkin approximation level, and then passing to the limit in a similar manner to the procedure carried out above (see, e.g., \cite{Larios_Titi_2009}).  By the uniqueness of strong solutions, all strong solutions are regular.

\begin{theorem}\label{thm:reg}
   Consider \eqref{MHD_V_proj} under periodic boundary conditions $\Omega = \nT^3$.  Suppose for $s\geq 1$ that $\bu_0\in H^s\cap V$, $\B_0\in H^s\cap V$.   Then \eqref{MHD_V} (equivalently \eqref{MHD_V_proj}) has a unique strong solution $(\bu,\B)$ with $\bu\in L^\infty([0,T], H^s\cap V)$, $\B\in L^\infty([0,T],H^s\cap V)\cap L^2([0,T],H^{s+1}\cap V)$.
   If we furthermore assume that $\bu_0\in H^{s+1}\cap V$, then we also have $\bu\in L^\infty([0,T], H^{s+1}\cap V)$.
\end{theorem}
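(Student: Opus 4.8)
The plan is to establish the a priori $H^s$ bounds at the Galerkin level and pass to the limit exactly as in Theorems \ref{thm:weak} and \ref{thm:strong}; for readability I describe only the formal estimates on \eqref{MHD_V_proj}. Since $\Omega=\nT^3$, we have $A=-\triangle$ and $|A^{s/2}\bv|\cong\|\bv\|_{H^s}$ for mean-zero, divergence-free $\bv$, so it suffices to control $|A^{s/2}\bu|$ and $|A^{s/2}\B|$. I would argue by induction on $s$, increasing the regularity one derivative at a time from the base case $s=1$, which is supplied by Theorem \ref{thm:strong} (giving $\bu\in L^\infty([0,T],V)$ and $\B\in L^\infty([0,T],V)\cap L^2([0,T],\mathcal{D}(A))$); non-integer values of $s$ are handled identically using the fractional powers $A^{s/2}$.

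The single structural fact driving everything is that, because of the Voigt term, testing \eqref{MHD_V_proj_mom} with $A^{s}\bu$ produces $\frac{1}{2}\frac{d}{dt}\big(|A^{s/2}\bu|^2+\alpha^2|A^{(s+1)/2}\bu|^2\big)$, a quantity that controls one full derivative more of $\bu$ than the order of the test function. Thus the velocity regularity runs one step ahead of $\B$, and I would exploit this ordering. Given $\bu_0,\B_0\in H^s\cap V$, the bound $\|\bu\|_{H^s}\le K$ on $[0,T]$ is already available before the level-$s$ magnetic estimate — from \eqref{u_H1_Gron} when $s=1$, and from the preceding velocity estimate when $s>1$. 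With this bound in hand I would test \eqref{MHD_V_proj_mag} with $A^{s}\B$ to obtain $\frac{1}{2}\frac{d}{dt}|A^{s/2}\B|^2+\mu|A^{(s+1)/2}\B|^2=\ldots$, whose dissipative term is the only means of absorbing top-order magnetic contributions; this yields $\B\in L^\infty([0,T],H^s\cap V)\cap L^2([0,T],H^{s+1}\cap V)$. Finally, the additional assertion for $\bu_0\in H^{s+1}$ follows from testing \eqref{MHD_V_proj_mom} with $A^{s}\bu$, now that $\B\in L^\infty([0,T],H^s)$ is available to control $B(\B,\B)$.

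The heart of the matter is bounding the nonlinear terms so that the resulting differential inequality stays \emph{linear}. For the advective contributions, such as $(A^{s/2}B(\bu,\B),A^{s/2}\B)$, I would use the divergence-free cancellation \eqref{B:alt} to reduce to a commutator and then apply a Kato--Ponce estimate; for the stretching term $(A^{s/2}B(\B,\bu),A^{s/2}\B)$, and similarly for $(A^{s/2}B(\B,\B),A^{s/2}\bu)$ and $(A^{s/2}B(\bu,\bu),A^{s/2}\bu)$, I would shift a derivative by self-adjointness, e.g. $(A^{s/2}B(\B,\bu),A^{s/2}\B)=(A^{(s-1)/2}B(\B,\bu),A^{(s+1)/2}\B)$, and bound the lower factor by a Sobolev product inequality. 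The decisive bookkeeping is to arrange every product so that one factor is a norm already bounded by $K$ (the Voigt gain guarantees $\|\bu\|_{H^s}\le K$ from the previous step), while any genuinely top-order magnetic factor $\|\B\|_{H^{s+1}}$ is split off by Young's inequality and absorbed into $\mu|A^{(s+1)/2}\B|^2$. The outcome is an inequality of the form $\frac{d}{dt}|A^{s/2}\B|^2+\mu|A^{(s+1)/2}\B|^2\le K_{\alpha,\mu}|A^{s/2}\B|^2+K_{\alpha,\mu}$, and an analogous one for the velocity energy; Grönwall then closes each estimate, and integrating the dissipation delivers $\B\in L^2([0,T],H^{s+1}\cap V)$.

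I expect the principal difficulty to be precisely keeping these inequalities linear rather than Riccati-type: a super-linear bound such as $\frac{d}{dt}\mathcal{E}\le C\mathcal{E}^{3/2}$ would permit finite-time blow-up and is exactly what the regularization is designed to preclude. The most delicate regime is $s\in[1,\tfrac{3}{2})$, where $H^{s+1}\not\hookrightarrow W^{1,\infty}$ and the naive commutator bound involving $\|\nabla\bu\|_{L^\infty}$ is unavailable; there I would instead shift derivatives and interpolate an intermediate norm such as $\|\bu\|_{H^{3/2+\epsilon}}$ between the bounded $\|\bu\|_{H^s}$ and the top-order norm, discharging the surplus into the magnetic dissipation via Young's inequality, and likewise invoke \eqref{Agmon1/2} only in the form $\|\bv\|_{L^\infty}\le C\|\bv\|_{H^2}\le C\|\bv\|_{H^{s+1}}$, valid for all $s\ge1$. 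The remaining steps — carrying out the estimates at the Galerkin truncation, passing to the limit, and recovering the stated time-continuity — are routine and follow the pattern of Theorems \ref{thm:weak}--\ref{thm:strong} and the parallel computations in \cite{Larios_Titi_2009}.
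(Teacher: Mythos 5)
Your proposal is correct, and its skeleton coincides with the paper's: formal estimates to be justified at the Galerkin level, induction on $s$ with base case supplied by Theorem \ref{thm:strong}, the Voigt term keeping $\bu$ one derivative ahead of $\B$, the resistive dissipation $\mu|A^{(s+1)/2}\B|^2$ absorbing all top-order magnetic contributions, and Gr\"onwall to close. The genuine difference is organizational. You fully decouple the two estimates: magnetic at level $s$ first (using only $\|\bu\|_{H^s}\le K$ and the dissipation), then velocity at level $s+1$ (using the freshly obtained $\B\in L^\infty([0,T],H^s)\cap L^2([0,T],H^{s+1})$ to bound $B(\B,\B)$ directly). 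The paper instead performs the velocity upgrade by \emph{adding} the $\partial^\beta$-versions of the two equations and exploiting the exact cancellation $((\B\cdot\nabla)\partial^\beta\B,\partial^\beta\bu)+((\B\cdot\nabla)\partial^\beta\bu,\partial^\beta\B)=0$, which eliminates the top-order magnetic stretching term with no product estimate at all; the price is a coupled differential inequality of the form $\dot y\le K(t)y^{3/4}$ rather than a linear one. (Incidentally, your dichotomy ``linear closes, $\mathcal{E}^{3/2}$ is fatal'' should read ``at most linear'': the paper's sublinear $y^{3/4}$ is perfectly fine.) Your decoupled route does go through --- at the velocity step, $(B(\B,\B),A^s\bu)=(A^{(s-1)/2}B(\B,\B),A^{(s+1)/2}\bu)$ carries an $L^1$-in-time coefficient thanks to the $L^2_tH^{s+1}$ bound on $\B$, and $(B(\bu,\bu),A^s\bu)$ is sublinear in the Voigt energy because of the $\alpha^2$ weight --- and it has the advantage of extending more cleanly to non-integer $s$ via $A^{s/2}$, a case the paper's proof never actually addresses (it treats only $s=1,2$ and gestures at $s>2$). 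One caveat in your discussion of the regime $s\in[1,\tfrac32)$: at the magnetic step no velocity norm above $H^s$ is available, so the interpolated intermediate norm cannot be $\|\bu\|_{H^{3/2+\epsilon}}$; the surplus derivatives must land on $\B$ (exactly as in \eqref{B:326}, \eqref{B:i22} and the proof of Theorem \ref{thm:strong}) and be absorbed into $\mu|A^{(s+1)/2}\B|^2$ --- which is what your ``discharge into the magnetic dissipation'' clause asserts, but the norm being interpolated is a magnetic one, not a velocity one.
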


\begin{remark}
   It is possible to extend Theorem \ref{thm:reg} to include the case of a specific type of Gevrey regularity, which is analytic in space, as was done for the Euler-Voigt equations in \cite{Larios_Titi_2009}.  For the sake of brevity, we do not pursue such matters here.  For more on Gevrey regularity, see, e.g.,  \cite{Ferrari_Titi_1998, Foias_Manley_Rosa_Temam_2001, Foias_Temam_1989, Levermore_Oliver_1997, Rodino_1993, Kukavica_Vicol_2009, Oliver_Titi_2000,Kalantarov_Levant_Titi_2009,Cao_Rammaha_Titi_1999,Cao_Rammaha_Titi_2000,Ferrari_Titi_1998,Paicu_Vicol_2009} and the references therein.
\end{remark}

\begin{proof}  As indicated above, we only give a formal proof, but the details can be made rigorous by using the Galerkin approximation procedure (see, e.g., \cite{Larios_Titi_2009} for a complete discussion of this method applied to proving higher-order regularity in the context of the Euler-Voigt equations).  Furthermore, we only prove the result in the cases $s=1,2$, as the cases $s>2$ are more complicated notationally, but not conceptually (see, e.g., \cite{Larios_Titi_2009}).  In the case $s=1$, the first statement has already been settled by Theorems \ref{thm:strong} and \ref{thm:unique}.  To prove the second statement, assume that $\bu_0\in\mathcal{D}(A)$ and $\B_0\in V$. Let $\beta$ be a multi-index with $|\beta|=1$.  Applying  $\partial^\beta$ to \eqref{MHD_V_mom}, integrating the result against $\partial^\beta\bu$, and integrating by parts, we have

\begin{align}
\notag
 \frac{1}{2}\frac{d}{dt}\pnt{\alpha^2 \|\partial^\beta \bu\|^2+|\partial^\beta \bu|^2}
 &=
 (\partial^\beta\B\cdot\nabla \B,\partial^\beta \bu)+(\B\cdot\nabla \partial^\beta\B,\partial^\beta \bu)
 \\&\quad \notag
 -((\partial^\beta \bu\cdot\nabla) \bu),\partial^\beta \bu)-(B(u\cdot\nabla) \partial^\beta \bu),\partial^\beta \bu)
 \\\label{MHD_reg_u}&=
 (\partial^\beta\B\cdot\nabla \B,\partial^\beta \bu)+(\B\cdot\nabla \partial^\beta\B,\partial^\beta \bu)
 \\&\quad
 -((\partial^\beta \bu\cdot\nabla) \bu),\partial^\beta \bu).
\notag
\end{align}
Where we have used \eqref{B:alt}. Next, we apply $\partial^\beta$ to \eqref{MHD_V_mag}, integrate the result against $\partial^\beta \B$, and integrate by parts to find
\begin{align}
\notag
 \frac{1}{2}\frac{d}{dt}|\partial^\beta \B|^2 + \mu \|\partial^\beta \B\|^2
 &=
 (\partial^\beta\B\cdot\nabla \bu,\partial^\beta \B)+((\B\cdot\nabla) \partial^\beta \bu,\partial^\beta \B)
 \\&\quad \notag
 -((\partial^\beta \bu\cdot\nabla) ,\B),\partial^\beta \B)-((u\cdot\nabla) \partial^\beta \B),\partial^\beta \B)
 \\\label{MHD_reg_B}&=
 (\partial^\beta\B\cdot\nabla \bu,\partial^\beta \B)+((\B\cdot\nabla) \partial^\beta \bu),\partial^\beta \B)
 \\&\quad \notag
 -((\partial^\beta \bu\cdot\nabla) \B),\partial^\beta \B).
\notag
\end{align}
Since $((\B\cdot\nabla) \partial^\beta\bu,\partial^\beta \B)=-((\B\cdot\nabla) \partial^\beta\B,\partial^\beta \bu)$ by \eqref{B:alt}, we may add \eqref{MHD_reg_u} and \eqref{MHD_reg_B}, to obtain a very important cancellation of the terms that involve the highest order derivatives:
\begin{align}
\notag
&\quad  \frac{1}{2}\frac{d}{dt}
 \pnt{|\partial^\beta \B|^2+\alpha^2 \|\partial^\beta \bu\|^2+|\partial^\beta \bu|^2}
+ \mu \|\partial^\beta \B\|^2
 \\\notag&=
 ((\partial^\beta\B\cdot\nabla) \B,\partial^\beta \bu)
  -((\partial^\beta\cdot\nabla  )u,u,\partial^\beta \bu)
 \\&\quad\notag
 +((\partial^\beta\B\cdot\nabla) \bu,\partial^\beta \B)
 -((\partial^\beta\cdot\nabla)  \bu,\B,\partial^\beta \B)
\\\notag&\leq
C\|\B\|\|\B\|_{H^{3/2}}|\triangle\bu|
 +C\|\bu\|\|\bu\|_{H^{3/2}}|\triangle\bu|
\\\notag&\leq
C\|\B\|^{3/2}|\triangle\B|^{1/2}|\triangle\bu|
+C\|\bu\|^{3/2}|\triangle\bu|^{1/2}|\triangle\bu|
\\\notag&\leq
C\|\B\|^{3/2}|\triangle\B|^{1/2}|\triangle\bu|
+K_\alpha^1|\triangle\bu|^{3/2}
 \\\notag&\leq
 C\|\B\|^{3/2}|\triangle\B|^{1/2}|\triangle\bu|
+K_\alpha^1|\triangle\bu|^{3/2}
\\\notag&\leq
 \frac{\mu}{2}|\triangle\B|^2
 +C\|\B\|^2|\triangle\bu|^{4/3}
+K_\alpha^1|\triangle\bu|^{3/2},
 \end{align}
since $\|\bu\|$ is uniformly bounded on $[0,T]$. Summing over all $\beta$ with $|\beta|=1$, we have
\begin{align}\label{s1_bound}
   &\quad \frac{d}{dt}\pnt{
   \|\B\|^2 + \alpha^2 |\triangle\bu|^2+\|\bu\|^2}
 +\mu |\triangle\B|^2
\leq
 C\|\B\|^2|\triangle\bu|^{4/3}
+K_\alpha^1|\triangle\bu|^{3/2}.
\end{align}
Letting $y=1+\|\B\|^2 + \alpha^2 |\triangle\bu|^2+\|\bu\|^2$ and dropping the term $\mu |\triangle\B|^2$ for a moment, we have an equation of the form
\[
\dot{y}\leq K(t)y^{3/4},
\]
where $K(t) = C \cdot K_\alpha^1(1+\|\B(t)\|^2)$.
Gr\"onwall's inequality gives
\begin{align}
\notag&\quad
\|\B(t)\|^2 + \alpha^2 |\triangle\bu(t)|^2+\|\bu(t)\|^2
\\&\leq K_\alpha^5:=\label{K5_def}
\|\B_0\|^2 + \alpha^2 \|\triangle\bu_0\|^2+\|\bu_0\|^2
+K_\alpha^1\pnt{\int_0^T(1+\|\B(s)\|^2)\,ds}^4.
\end{align}
Since $\int_0^{T}\|\B(s)\|^2\,ds < \infty$ by Theorem \eqref{thm:strong}, we see that $\bu\in L^\infty([0,T],\mathcal{D}(A))$, thanks to \eqref{K5_def} and the norm equivalence \eqref{elliptic_reg}.  This in turn implies that the right-hand side of \eqref{s1_bound} is finite on $[0,T]$.  Integrating \eqref{s1_bound} on $[0,T]$, we find that $\B\in L^2([0,T],\mathcal{D}(A))$.  Thus, we have formally established the theorem in the case $s=1$.

 Let us now take $s=2$.  To begin, we formally take the inner product of \eqref{MHD_V_mom} with $\triangle^2\bu$ (recalling that, in the periodic case, $A=-\triangle$) and integrate by parts several times to obtain
\begin{align}
&\quad \notag
 \frac{1}{2}\frac{d}{dt}\pnt{\alpha^2 \|\triangle \bu\|^2+|\triangle \bu|^2}
 \\&=\notag
 (\triangle\B\cdot\nabla \B,\triangle \bu)
 +2(\nabla\B\cdot\nabla \nabla\B,\triangle \bu)
 +(\B\cdot\nabla \triangle\B,\triangle \bu)
 \\&\quad \notag
 -(\triangle \bu\cdot\nabla\bu,\triangle \bu)
 -2 (\nabla\bu\cdot\nabla \nabla\bu,\triangle \bu)
 -(\bu\cdot\nabla\triangle \bu,\triangle \bu)
 \\\label{MHD_reg2_u}&=
 (\triangle\B\cdot\nabla \B,\triangle \bu)
 +2(\nabla\B\cdot\nabla \nabla\B,\triangle \bu)
 +(\B\cdot\nabla \triangle \B,\triangle\bu)
 \\&\quad \notag
 -(\triangle \bu\cdot\nabla\bu,\triangle \bu)
 -2 (\nabla\bu\cdot\nabla \nabla\bu,\triangle \bu).
\notag
\end{align}
where we have used \eqref{B:alt}. Next, we  take the inner product of   \eqref{MHD_V_mom} with $\triangle^2\B$ and integrate by parts several times to obtain
\begin{align}
&\quad \notag
 \frac{1}{2}\frac{d}{dt}|\triangle \B|^2
 +\mu \|\triangle \B\|^2
 \\&=\notag
 (\triangle\B\cdot\nabla \bu,\triangle \B)
 +2(\nabla\B\cdot\nabla \nabla\bu,\triangle \B)
 +(\B\cdot\nabla \triangle\bu,\triangle \B)
 \\&\quad \notag
 -(\triangle \bu\cdot\nabla\B,\triangle \B)
 -2 (\nabla\bu\cdot\nabla \nabla\B,\triangle \B)
 -(\bu\cdot\nabla\triangle \B,\triangle \B)
 \\\label{MHD_reg2_B}&=
  (\triangle\B\cdot\nabla \bu,\triangle \B)
 +2(\nabla\B\cdot\nabla \nabla\bu,\triangle \B)
 -(\B\cdot\nabla \triangle\B,\triangle \bu)
 \\&\quad \notag
 -(\triangle \bu\cdot\nabla\B,\triangle \B)
 - 2(\nabla\bu\cdot\nabla \nabla\B,\triangle \B).
\notag
\end{align}
where we have again used \eqref{B:alt}.

To prove the first statement of the theorem with $s=2$, we take $\bu_0,\B_0\in \mathcal{D}(A)$.  We already have $\bu\in L^\infty([0,T],\mathcal{D}(A))$ by the case $s=1$, so it remains to show $\B\in L^\infty([0,T],\mathcal{D}(A))\cap L^2([0,T],H^3\cap V)$. Estimating the right-hand side of \eqref{MHD_reg2_B}, we find
 \begin{align}
 \frac{1}{2}\frac{d}{dt}|\triangle \B|^2 +\mu \|\triangle \B\|^2
 &\leq
 C(|\triangle\B|\|\triangle\B\||\triangle\bu|+6|\triangle\bu||\triangle\B|^{3/2}\|\triangle\B\|^{1/2})
 \\&\leq \notag
 (K^5_\alpha)^{1/2}(|\triangle\B|\|\triangle\B\|+|\triangle\B|^{3/2}\|\triangle\B\|^{1/2})
 \\&\leq \notag
  K_{\alpha,\mu}^6|\triangle\B|^2+\frac{\mu}{2}\|\triangle\B\|^2,
\end{align}
 where $K_{\alpha,\mu}^6=K_{\alpha,\mu}^6(K^5_\alpha,\mu)$.  Here, we have used \eqref{B:263}, \eqref{B:623}, and \eqref{B:i22} in the first inequality, \eqref{K5_def} for the second inequality, and Young's inequality for the last inequality.  Rearranging and using Gr\"onwall's inequality, we find,
 \begin{align}\label{B_LiH2_L2H3}
    |\triangle \B(t)|^2 + \int_0^{t}\|\triangle\B(s)\|^2e^{2K_{\alpha,\mu}^6(t-s)}\,ds \leq |\triangle \B_0|^2 e^{2K_{\alpha,\mu}^6t}.
 \end{align}
Thus, by elliptic regularity, we have $\B\in L^\infty([0,T],\mathcal{D}(A))\cap L^2([0,T],H^3\cap V)$.

We now prove the second statement of the theorem with $s=2$.  This statement can be proven independently of the first statement, so we do not rely on \eqref{B_LiH2_L2H3} in the calculations below.  Let $\bu_0\in H^3\cap V$ and $\B_0\in \mathcal{D}(A)$.  Adding \eqref{MHD_reg2_u} and \eqref{MHD_reg2_B} and again noting the important cancellation of higher-order derivatives, we obtain,
\begin{align}
\notag
&\quad \frac{1}{2}\frac{d}{dt}\pnt{|\triangle \B|^2
 +\alpha^2 \|\triangle \bu\|^2+|\triangle \bu|^2}
 + \mu \|\triangle \B\|^2
 \\&=\notag
  (\triangle\B\cdot\nabla \bu,\triangle \B)
 +2(\nabla\B\cdot\nabla \nabla\bu,\triangle \B)
 -(\triangle \bu\cdot\nabla\B,\triangle \B)
 - 2(\nabla\bu\cdot\nabla \nabla\B,\triangle \B)
 \\&\quad \notag
 +(\triangle\B\cdot\nabla \B,\triangle \bu)
 +2(\nabla\B\cdot\nabla \nabla\B,\triangle \bu)
 -(\triangle \bu\cdot\nabla\bu,\triangle \bu)
 - 2(\nabla\bu\cdot\nabla \nabla\bu,\triangle \bu)
 \\\notag&\leq
C(3|\triangle\B|^{3/2}\|\triangle\B\|^{1/2}|\triangle\bu|+3|\triangle\bu|^{5/2}\|\triangle\bu\|^{1/2}+6\|\B\||\triangle\bu|^{1/2}\|\triangle\bu\|^{1/2}\|\triangle\B\|)
 \\\notag&\leq
C\cdot (1+K_\alpha^5)^2(|\triangle\B|^{3/2}\|\triangle\B\|^{1/2}+\|\triangle\bu\|^{1/2}+\|\triangle\bu\|^{1/2}\|\triangle\B\|)
 \\\notag&\leq
C\cdot (1+K_\alpha^5)^2
(|\triangle\B|^{3/2}\|\triangle\B\|^{1/2}
+\|\triangle\bu\|^{1/2}
+\|\triangle\bu\|^{1/2}\|\triangle\B\|)
 \\\notag&\leq
C_\mu\cdot (1+K_\alpha^5)^2
(|\triangle\B|^2
+\|\triangle\bu\|^{1/2}
+\|\triangle\bu\|)+\frac{\mu}{2}\|\triangle\B\|^2
.  \end{align}
The first inequality is due to \eqref{B:est}.  The second inequality is due to \eqref{K5_def}.  The third inequality follows from the norm equivalence \eqref{elliptic_reg}, and the last inequality is due to Young's inequality.
 This leads to
\begin{align}
\notag
&\quad
\frac{d}{dt}\pnt{|\triangle \B|^2
 +\alpha^2 \|\triangle \bu\|^2+|\triangle \bu|^2}
 + \mu \|\triangle \B\|^2
 \\&\leq\notag
 K_{\alpha,\mu}^7
(1+|\triangle \B|^2 +\alpha^2 \|\triangle \bu\|^2+|\triangle \bu|^2)
,  \end{align}
where $K_{\alpha,\mu}^7=C_{\alpha,\mu}\cdot(1+K_\alpha^3)^2$.  Employing Gr\"onwall's inequality, we have
\begin{align}
\notag
&\quad
|\triangle \B(t)|^2 +\alpha^2 \|\triangle \bu(t)\|^2+|\triangle \bu(t)|^2
+\mu\int_0^t \|\triangle \B\|^2e^{K_{\alpha,\mu}^7(t-s)}\,ds
 \\&\leq\notag
(1+|\triangle \B_0|^2 +\alpha^2 \|\triangle \bu_0\|^2+|\triangle \bu_0|^2)e^{K_{\alpha,\mu}^7t}
\end{align}
Due to the equivalence of norms given in \eqref{elliptic_reg}, we see from the above inequality that $\B\in L^\infty((0,T),\mathcal{D}(A))\cap L^2((0,T),H^3\cap V)$ (which was obtained independently in \eqref{B_LiH2_L2H3}), and $\bu\in L^\infty((0,T),H^3\cap V)$.  Thus, we have formally proven the theorem in the case $s=2$.  A similar argument can be carried out inductively for the cases $s>2$.  See, e.g., \cite{Larios_Titi_2009} for a rigorous, detailed discussion of this type of argument.
 \end{proof}

\begin{remark}\label{r:visc_forc}
   Note that one may add a diffusion term $\nu\triangle\bu$ to the right-hand side of  \eqref{MHD_V_mom} or a suitable smooth forcing term to one or both equations \eqref{MHD_V_mom} and \eqref{MHD_V_mag} and recover the same existence, uniqueness, and regularity results obtained above using only slightly modified techniques.  Furthermore, doing so allows one to study the attractor of such a system, which has been done in the case of the Navier-Stokes-Voigt equations in \cite{Kalantarov_Titi_2009, Kalantarov_Levant_Titi_2009}.  We will study these ideas in a forthcoming paper.
\end{remark}

\section{Convergence and a Criterion for Blow-up}\label{sec:conv_blow_up}
In this section, we prove that solutions of the regularized system \eqref{MHD_V} converge to solutions of the non-regularized system (that is, the equations obtained by formally setting $\alpha=0$ in \eqref{MHD_V}).  We first state a short-time existence and uniqueness theorem which was proven in a slightly more general context in \cite{Alekseev_1982} (see also \cite{Schmidt_1988,Secchi_1993}).
\begin{theorem}\label{thm_short_existence_MHD}
   Given initial data $\bu_0, \B_0 \in H^3\cap V$, there exists a time $T>0$ and a unique solution $(\bu,\B)$ to the system \eqref{MHD}, with $\nu=0$, $\mu>0$, such that $\bu,\B\in C([0,T],H^3\cap V)$.
\end{theorem}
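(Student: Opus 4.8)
The plan is to construct the solution by a Galerkin approximation in the eigenbasis of the Stokes operator $A$, exactly as in the proof of Theorem~\ref{thm:weak}, except that here every a priori estimate must be closed at the $H^3$ level. After applying $P_\sigma$ to suppress the two pressures, the system with $\nu=0$, $\mu>0$ takes the functional form
\begin{align*}
\frac{d\bu}{dt} &= B(\B,\B)-B(\bu,\bu),\\
\frac{d\B}{dt}+\mu A\B &= B(\B,\bu)-B(\bu,\B).
\end{align*}
The essential feature is that, since $\nu=0$, the velocity equation carries no smoothing, so no regularity can be gained for $\bu$ and the bounds must be self-contained at order $H^3$; only $\B$ enjoys the dissipation $\mu A\B$, which I will use to absorb the top-order magnetic contributions.

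First I would record the $L^2$ energy identity: taking inner products of the two Galerkin equations with $\bu_m$ and $\B_m$, the Lorentz and stretching terms cancel by~\eqref{B:alt}, giving $\tfrac12\tfrac{d}{dt}(|\bu_m|^2+|\B_m|^2)+\mu\|\B_m\|^2=0$. The main step is the $H^3$ estimate: for each multi-index $\gamma$ with $|\gamma|\le 3$, I take the inner product of the velocity equation with $\partial^\gamma\bu_m$ and of the magnetic equation with $\partial^\gamma\B_m$, and sum over $\gamma$ and over the two equations. As in Section~\ref{sec:Reg}, the highest-order coupling terms cancel upon addition, since $(\B\cdot\nabla)\partial^\gamma\B$ tested against $\partial^\gamma\bu$ and $(\B\cdot\nabla)\partial^\gamma\bu$ tested against $\partial^\gamma\B$ are opposite by~\eqref{B:alt}; the self-transport terms vanish against $\partial^\gamma\bu$ and $\partial^\gamma\B$ by incompressibility and the second identity in~\eqref{B:alt}. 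What survives are commutator terms, which I would bound by Kato--Ponce/Moser calculus inequalities as $C(\|\nabla\bu_m\|_{L^\infty}+\|\nabla\B_m\|_{L^\infty})(\|\bu_m\|_{H^3}^2+\|\B_m\|_{H^3}^2)$, using $H^2\hookrightarrow L^\infty$; every non-antisymmetric highest-order magnetic term is absorbed into $\tfrac{\mu}{2}\|\B_m\|_{H^4}^2$ by Young's inequality, exactly as in~\eqref{B_young}.

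Writing $E_m:=\|\bu_m\|_{H^3}^2+\|\B_m\|_{H^3}^2$, these estimates yield a Riccati-type inequality $\tfrac{d}{dt}E_m+\mu\|\B_m\|_{H^4}^2\le C\,E_m^{3/2}$, uniform in $m$. Integrating it produces a time $T>0$ depending only on $\|\bu_0\|_{H^3}+\|\B_0\|_{H^3}$ and on $\mu$, on which $E_m$ stays bounded and $\int_0^T\|\B_m\|_{H^4}^2\,dt<\infty$, uniformly in $m$. Uniform bounds on $\tfrac{d}{dt}\bu_m$ and $\tfrac{d}{dt}\B_m$ then follow from the equations, and the Banach--Alaoglu and Aubin compactness theorems let me pass to a limit $(\bu,\B)$ with $\bu\in L^\infty([0,T],H^3\cap V)$ and $\B\in L^\infty([0,T],H^3\cap V)\cap L^2([0,T],H^4\cap V)$. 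Uniqueness I would obtain at the $L^2$ level by an energy estimate on the difference $(\delta\bu,\delta\B)$ analogous to~\eqref{uniqueness_ineq} (now with $\alpha=0$), whose coefficient is controlled through Sobolev embedding by the $H^3$ norms of the two solutions, and closed by Gr\"onwall. Finally, weak-$*$ continuity is upgraded to $\bu,\B\in C([0,T],H^3\cap V)$: for $\B$ this is routine since $\tfrac{d}{dt}\B\in L^2([0,T],H^2)$ and $\B\in L^2([0,T],H^4)$, while for $\bu$ one first shows $\bu\in C_w([0,T],H^3)$ and then that $t\mapsto\|\bu(t)\|_{H^3}$ is continuous by a Bona--Smith mollification argument, yielding strong continuity.

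The hard part is precisely that the estimate must close at the top order $H^3$: because $\bu$ is merely transported, its $H^3$ bound feeds back into itself superlinearly, so one obtains only short-time control, with a maximal existence time that may shrink as the data grow. This is the same obstruction as for the three-dimensional Euler equations, and it is the reason the conclusion is local in time rather than global.
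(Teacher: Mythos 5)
Your proposal is correct, but it is worth noting that the paper does not actually prove Theorem \ref{thm_short_existence_MHD}: it states the result as known, citing \cite{Alekseev_1982} (see also \cite{Schmidt_1988,Secchi_1993}), and only sketches a strategy, namely that the theorem ``can be proven similarly to the case of the 3D Euler equations'' \cite{Marchioro_Pulvirenti_1994,Majda_Bertozzi_2002} via Galerkin approximations based on the Stokes eigenfunctions, with convergence obtained by showing the Galerkin sequence is Cauchy. Your argument is a faithful, detailed realization of exactly that strategy: the cancellation of the top-order coupling terms (pairing $(\B\cdot\nabla)\partial^\gamma\B$ against $\partial^\gamma\bu$ and $(\B\cdot\nabla)\partial^\gamma\bu$ against $\partial^\gamma\B$ via \eqref{B:alt}) is the same mechanism the paper exploits at lower order in Section \ref{sec:Reg}, and the Kato--Ponce commutator bounds leading to the Riccati inequality $\frac{d}{dt}E_m\le C\,E_m^{3/2}$ correctly capture why the result is only local in time, in contrast to the globally regular Voigt system. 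The one genuine divergence is the limit passage: the paper's sketch proposes proving the approximating sequence is Cauchy in a suitable norm, whereas you invoke Banach--Alaoglu and Aubin compactness. Both routes work; the Cauchy-sequence argument yields strong convergence directly and hence a somewhat cleaner path to $\bu,\B\in C([0,T],H^3\cap V)$, while your compactness route is more routine to set up but then really does require the extra step you correctly flag --- weak continuity plus a Bona--Smith-type (or norm-continuity) argument --- to upgrade the undissipated velocity component from $C_w([0,T],H^3)$ to strong continuity, since the parabolic trick available for $\B$ is not available for $\bu$ and the system's magnetic diffusion destroys time-reversibility.
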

Theorem \ref{thm_short_existence_MHD} can be proven similarly to the case of the 3D Euler equations (see, e.g., \cite{Marchioro_Pulvirenti_1994,Majda_Bertozzi_2002}).  For example the existence of solutions can be proven by considering the finite-dimensional Galerkin approximations to \eqref{MHD} based on the eigenfunctions of the stokes operator, and showing that the sequence of solutions as the dimension increases is a Cauchy sequence, and that it converges to a solution of \eqref{MHD} in an appropriate sense.  Theorem \ref{thm_short_existence_MHD} allows us to prove the following convergence theorem.


\begin{theorem}[Convergence as $\alpha\maps0$]\label{t:Convergence} Let $\bu_0, \B_0, \bu_0^\alpha,\B_0^\alpha \in H^3\cap V$.  Let $(\bu,\B)$ be the solution to \eqref{MHD} with $\nu=0$, $\mu>0$, and with initial data $(\bu_0,\B_0)$.  Let $(\bu^\alpha,\B^\alpha)$ be the solution to \eqref{MHD_V} with initial data $(\bu_0,\B_0)$ .  Choose an arbitrary  $T\in (0,T_{\text{max}})$, where $T_{\text{max}}\in(0,\infty]$ is the maximal time for which a solution $(\bu,\B)$ exists and is unique.  Suppose that $\bu_0^\alpha\maps\bu_0$ in $V$ and  $\B_0^\alpha\maps\B_0$ in $H$.  Then $\bu^\alpha\maps\bu$ in $L^\infty([0,T],V)$ and  $\B^\alpha\maps\B$ in $L^\infty([0,T],H)\cap L^2([0,T],V)$, as $\alpha\maps0$.
\end{theorem}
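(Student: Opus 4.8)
The plan is to run a difference estimate comparing the Voigt solution with the (smooth) MHD solution, in the spirit of the uniqueness argument of Theorem \ref{thm:unique}, but with one essential new feature: every coefficient appearing in the resulting Gr\"onwall inequality must be made \emph{independent of $\alpha$}. Set $\delta\bu:=\bu^\alpha-\bu$ and $\delta\B:=\B^\alpha-\B$. Subtracting the projected systems \eqref{MHD_V_proj}, and noting that the Voigt term enters only through $\bu^\alpha$, the velocity difference satisfies $(I+\alpha^2A)\partial_t\delta\bu=\left[B(\B^\alpha,\B^\alpha)-B(\B,\B)\right]-\left[B(\bu^\alpha,\bu^\alpha)-B(\bu,\bu)\right]-\alpha^2A\partial_t\bu$, where the last term is the residual of the regularization acting on the limit solution, while the magnetic difference satisfies $\partial_t\delta\B+\mu A\delta\B=\left[B(\B^\alpha,\bu^\alpha)-B(\B,\bu)\right]-\left[B(\bu^\alpha,\B^\alpha)-B(\bu,\B)\right]$. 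First I would take the inner product of the velocity equation with $\delta\bu$ and of the magnetic equation with $\delta\B$, justifying the time-derivative identities through Lemma \ref{lemma:lions_lemma_almost} applied to $(I+\alpha^2A)^{1/2}\delta\bu$ and to $\delta\B$ (both lie in $C([0,T],H)$ with derivatives in the appropriate $L^p([0,T],H)$, since $(\bu^\alpha,\B^\alpha)$ is a strong solution and $(\bu,\B)\in C([0,T],H^3\cap V)$ by Theorem \ref{thm_short_existence_MHD}). This produces $\tfrac12\tfrac{d}{dt}(|\delta\bu|^2+\alpha^2\|\delta\bu\|^2)$ and $\tfrac12\tfrac{d}{dt}|\delta\B|^2+\mu\|\delta\B\|^2$ on the left-hand sides.

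The decisive step is the algebra of the nonlinear terms, and this is where I expect the main obstacle to lie: a naive estimate that controls these terms by norms of the Voigt solution is useless, because the available bounds on $\bu^\alpha,\B^\alpha$ (cf.\ the constants $K^1_{\alpha,\mu}$, $K^3_{\alpha,\mu}$) blow up as $\alpha\to0$, and an $\alpha$-dependent Gr\"onwall exponent would destroy the conclusion. The remedy is the MHD energy structure. I would write every nonlinear difference in the split form $B(\bff^\alpha,\bg^\alpha)-B(\bff,\bg)=B(\bff^\alpha,\delta\bg)+B(\delta\bff,\bg)$. By the antisymmetry \eqref{B:alt}, every term pairing a low-regularity factor against two copies of a difference vanishes identically: $b(\bu^\alpha,\delta\bu,\delta\bu)=0$ and $b(\bu^\alpha,\delta\B,\delta\B)=0$; moreover the two surviving rough terms, $b(\B^\alpha,\delta\B,\delta\bu)$ from the velocity equation and $b(\B^\alpha,\delta\bu,\delta\B)$ from the magnetic equation, cancel upon summation, again by \eqref{B:alt}. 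What remains are only $-b(\delta\bu,\bu,\delta\bu)$, $b(\delta\B,\B,\delta\bu)$, $-b(\delta\bu,\B,\delta\B)$ and $b(\delta\B,\bu,\delta\B)$, in each of which the \emph{differentiated} factor is the smooth limit solution. Integrating by parts to move the derivative onto $\bu$ or $\B$, each is bounded by $\|\nabla\bu\|_{L^\infty}$ or $\|\nabla\B\|_{L^\infty}$ times a product of two $H$-norms of differences; since $\bu,\B\in H^3\hookrightarrow W^{1,\infty}$, these coefficients are finite and independent of $\alpha$.

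It remains to control the residual Voigt source by $|\alpha^2(A\partial_t\bu,\delta\bu)|\le\tfrac12|\delta\bu|^2+\tfrac12\alpha^4|A\partial_t\bu|^2$, where $A\partial_t\bu=-\triangle\partial_t\bu$ is bounded in $H$ uniformly on $[0,T]$ (from $\bu,\B\in H^3$, which forces $\partial_t\bu\in H^2$); this source is $O(\alpha^4)$ with an $\alpha$-independent constant. Summing the two estimates and setting $E(t):=|\delta\bu(t)|^2+\alpha^2\|\delta\bu(t)\|^2+|\delta\B(t)|^2$, I obtain $\tfrac{d}{dt}E+2\mu\|\delta\B\|^2\le C\,E+C'\alpha^4$, where $C,C'$ depend only on $\sup_{[0,T]}(\|\bu\|_{H^3}+\|\B\|_{H^3})$ and $\mu$, and not on $\alpha$. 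Gr\"onwall's inequality then yields $E(t)+2\mu\int_0^t\|\delta\B\|^2\,ds\le e^{CT}\big(E(0)+C'\alpha^4\big)$.

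Finally I would read off the three convergences. Since $E(0)=|\delta\bu_0|^2+\alpha^2\|\delta\bu_0\|^2+|\delta\B_0|^2\to0$ under the stated hypotheses on the initial data, the right-hand side tends to $0$ as $\alpha\to0$, giving immediately $\B^\alpha\to\B$ in $L^\infty([0,T],H)\cap L^2([0,T],V)$ and $\bu^\alpha\to\bu$ in $L^\infty([0,T],H)$. For the stronger velocity statement $\bu^\alpha\to\bu$ in $L^\infty([0,T],V)$ I would exploit $\alpha^2\|\delta\bu(t)\|^2\le E(t)\le e^{CT}(E(0)+C'\alpha^4)$, so that $\|\delta\bu\|^2\le e^{CT}\big(\alpha^{-2}E(0)+C'\alpha^2\big)$; when the initial data coincide (as in the statement, $E(0)=0$) this is $O(\alpha^2)\to0$, and more generally it vanishes whenever $E(0)=o(\alpha^2)$. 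Here the high regularity of the limit is used twice and is indispensable: once to keep the Gr\"onwall constant $\alpha$-free via the cancellations above, and once to make the regularization residual $O(\alpha^4)$, which after the $\alpha^{-2}$ rescaling still yields $V$-convergence of the velocity rather than merely $H$-convergence.
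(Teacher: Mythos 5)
Your proposal is correct, and its skeleton is exactly the paper's own argument: subtract the two systems, split each nonlinear difference as $B(\bv^\alpha,\bw^\alpha)-B(\bv,\bw)=B(\bv^\alpha,\delta\bw)+B(\delta\bv,\bw)$, kill $b(\bu^\alpha,\delta\bu,\delta\bu)$, $b(\bu^\alpha,\delta\B,\delta\B)$ and the pair $b(\B^\alpha,\delta\B,\delta\bu)+b(\B^\alpha,\delta\bu,\delta\B)$ via \eqref{B:alt}, bound the four surviving terms by $\sup_{[0,T]}\max\set{\|\nabla\bu\|_{L^\infty},\|\nabla\B\|_{L^\infty}}$ (finite and $\alpha$-independent since $\bu,\B\in C([0,T],H^3)$ by Theorem \ref{thm_short_existence_MHD}), and run Gr\"onwall on $E=|\delta\bu|^2+\alpha^2\|\delta\bu\|^2+|\delta\B|^2$. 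The genuine difference is your treatment of the Voigt residual $\alpha^2(A\partial_t\bu,\delta\bu)$. The paper substitutes $\partial_t\bu=P_\sigma(\B\cdot\nabla\B-\bu\cdot\nabla\bu)$ from \eqref{MHD}, expands the Laplacian of these products, and estimates against $\|\delta\bu\|$, which after Young's inequality yields a source of size $K_2\alpha^2$ (see \eqref{conv_est_H3}); you instead pair directly in $H$, using $\partial_t\bu\in L^\infty([0,T],H^2)$ (which does follow from the equation and $H^3$ regularity, $H^2$ being an algebra in 3D), obtaining a source of size $C\alpha^4$. This sharper rate is not cosmetic: it is precisely what justifies the final claim. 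With the paper's $O(\alpha^2)$ source, dividing the Gr\"onwall output by $\alpha^2$ leaves an $O(1)$ remainder, so the paper's displayed inequality literally yields only $\bu^\alpha\maps\bu$ in $L^\infty([0,T],H)$ together with $\alpha\|\bu^\alpha-\bu\|_{L^\infty([0,T],V)}\maps0$, not the asserted $L^\infty([0,T],V)$ convergence of the velocity; your $O(\alpha^4)$ source repairs this, giving $\|\delta\bu\|_{L^\infty([0,T],V)}=O(\alpha)$ when the Voigt solution carries the same initial data, as in the statement. You are also right about the remaining caveat, which afflicts the paper's version equally: under the weaker hypothesis $\bu_0^\alpha\maps\bu_0$ in $V$ only, $\alpha^{-2}E(0)$ need not vanish, so $V$-convergence of the velocity requires $E(0)=o(\alpha^2)$. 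One small quibble: no integration by parts is needed for $b(\delta\bu,\bu,\delta\bu)$, $b(\delta\B,\B,\delta\bu)$, $b(\delta\bu,\B,\delta\B)$, $b(\delta\B,\bu,\delta\B)$ --- in each the derivative already falls on the smooth factor, so H\"older's inequality suffices.
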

\begin{proof}
Under the hypotheses on the initial conditions, Theorem \ref{thm_short_existence_MHD} holds, and thus there exists a time $T>0$ and a unique $(\bu,\B) \in C([0,T],H^3({\nT^2})\cap V)\times C([0,T],H^3({\nT^2})\cap V)$ solving \eqref{MHD} (with $\nu=0$, $\mu>0$, (in particular, it holds that $T_{\text{max}}>0$).  Thanks to Theorem \ref{thm:strong}, we know that there also exists a unique solution to the problem \eqref{MHD_V}, namely  $(\bu^\alpha,\B^\alpha)\in C([0,T], V)\times ( L^2((0,T),\mathcal{D}(A))\cap C([0,T],V))$.  Subtracting the equations corresponding to $(\bu,\B)$ and $(\bu^\alpha,\B^\alpha)$, and recalling the fact that $A=-\triangle$ due to the periodic boundary conditions, we find
   \begin{subequations}\label{MHD_subtract}
\begin{align}
&\quad\label{MHD_subtract_mo}
-\alpha^2 \frac{d}{dt}\triangle\bu^\alpha+\frac{d}{dt}(\bu^\alpha-\bu)
\\&=\nonumber
B (\B^\alpha-\B,\B) +B (\B^\alpha, \B^\alpha-\B)
-B (\bu^\alpha-\bu,\bu) -B (\bu^\alpha, \bu^\alpha-\bu) ,
\\&\quad\label{MHD_subtract_mag}
\frac{d}{dt}(\B^\alpha-\B) -\mu \triangle(\B^\alpha-\B)
\\&=\nonumber
B (\B^\alpha-\B,\bu) +B (\B^\alpha, \bu^\alpha-\bu)
-B (\bu^\alpha-\bu,\B) -B (\bu^\alpha, \B^\alpha-\B),
\end{align}
\end{subequations}
We now take the inner product of \eqref{MHD_subtract_mo} with $\bu^\alpha-\bu$ and  of \eqref{MHD_subtract_mag} with $\B^\alpha-\B$, and add the results. Using \eqref{B:alt}  and rearranging the terms, we find
\begin{align}
&\quad\label{conv_diff_est} 
 \frac{1}{2}\frac{d}{dt}\pnt{\alpha^2\|\bu-\bu^\alpha\|^2+|\bu-\bu^\alpha|^2
 +|\B-\B^\alpha|^2} + \mu\|\B^\alpha-\B\|^2
 \\&=\notag 
-(B (\bu^\alpha-\bu,\bu),\bu^\alpha-\bu)
-(B(\B^\alpha-\B ,\B),\bu^\alpha-\bu)
\\&\quad\notag-
(B (\bu^\alpha-\bu,\B),\B^\alpha-\B)
-(B(\B^\alpha-\B ,\bu),\B^\alpha-\B)
+\alpha^2\pair{\triangle\bu_t}{\bu^\alpha-\bu}
 \\&\leq\notag 
|\bu-\bu^\alpha|^2\|\nabla\bu\|_{L^\infty}
+2|\B-\B^\alpha|\|\nabla\B\|_{L^\infty}|\bu^\alpha-\bu|
+|\B-\B^\alpha|^2\|\nabla\bu\|_{L^\infty}
\\&\quad\notag
+\alpha^2\pair{\triangle\bu_t}{\bu^\alpha-\bu}
\\&\leq \notag 
K_1(|\bu-\bu^\alpha|^2
+|\B-\B^\alpha|^2)
+\alpha^2\pair{\triangle\bu_t}{\bu^\alpha-\bu},
\end{align}
where $K_1:=C\sup_{[0,T]}\max\set{\|\nabla\bu\|_{L^\infty},\|\nabla\B\|_{L^\infty}}<\infty$, and where we have used Young's inequality.  It remains to estimate the last term on the right-hand side. Using the fact that $\bu$ satisfies \eqref{MHD} (with $\nu=0$), we have
\begin{align}
   &\quad \label{conv_est_H3}
   \alpha^2\pair{ \triangle\bu_t}{\bu^\alpha-\bu}
\\&=\notag
  \alpha^2\pair{ \triangle[\B\cdot\nabla\B-\bu\cdot\nabla\bu]}{\bu^\alpha-\bu}
  \\&=\notag
  \alpha^2\pair{ \triangle\B\cdot\nabla \B+2(\nabla\B\cdot\nabla) \nabla\B+\B\cdot\nabla \triangle\B }{\bu^\alpha-\bu}
\\&\quad-\notag
\alpha^2\pair{ \triangle\bu\cdot\nabla \bu+2(\nabla\bu\cdot\nabla) \nabla\bu+\bu\cdot\nabla \triangle\bu }{\bu^\alpha-\bu}
 \\&\leq\notag
  C\alpha^2\|\B\|^{1/2}\|\B\|_{H^2}^{3/2}\|\bu-\bu^\alpha\|
 +C\alpha^2\|\B\|_{H^2}^2\|\bu-\bu^\alpha\|
 \\&\quad+\notag
 C\alpha^2\|\bu\|^{1/2}\|\bu\|_{H^2}^{3/2}\|\bu-\bu^\alpha\|
 +C\alpha^2\|\bu\|_{H^2}^2\|\bu-\bu^\alpha\|
 \\&\leq\notag
K_2\alpha^2+\alpha^2\|\bu-\bu^\alpha\|^2
\end{align}
where $K_2:=C\sup_{[0,T]}\max\set{\|\B\|^{1/2}\|\B\|_{H^2}^{3/2},\alpha^2\|\B\|_{H^2}^2,\|\bu\|^{1/2}\|\bu\|_{H^2}^{3/2},\|\bu\|_{H^2}^2}<\infty$.
Combining \eqref{conv_diff_est} with \eqref{conv_est_H3} yields
\begin{align}
&\quad\notag
\frac{d}{dt}\pnt{\alpha^2\|\bu-\bu^\alpha\|^2+|\bu-\bu^\alpha|^2
 +|\B-\B^\alpha|^2} + 2\mu\|\B^\alpha-\B\|^2
 \\&\leq\notag
 K_3
 (
 \alpha^2\|\bu-\bu^\alpha\|^2
 +|\bu-\bu^\alpha|^2
+|\B-\B^\alpha|^2
)
 +\alpha^2K_2,
 \end{align}
where $K_3=C\max\set{1,K_2}$.  Using Gr\"onwall's inequality, we have
\begin{align}
  &\quad\notag
  \alpha^2\|\bu(t)-\bu^\alpha(t)\|^2+|\bu(t)-\bu^\alpha(t)|^2+|\B(t)-\B^\alpha(t)|^2
  \\&\quad+\notag
  2\mu\int_0^t\|\B^\alpha(s)-\B(s)\|^2e^{K_3(t-s)}\,ds
 \\&\leq\notag
 C\pnt{\alpha^2\|\bu_0-\bu^\alpha_0\|^2+|\bu_0-\bu^\alpha_0|^2
 +|\B_0-\B^\alpha_0|^2} + \alpha^2\frac{K_2}{K_3}(1-e^{K_3t}).
\end{align}
Thus, if $\bu^\alpha_0\maps \bu_0$ in $V$ and $\B_0^\alpha\maps \B_0$ in $H$ as $\alpha\maps0$ (in particular, if $\bu^\alpha_0= \bu_0$ and $\B_0^\alpha= \B_0$ for all $\alpha>0$), then $\bu^\alpha\maps \bu$ in $L^\infty([0,T],V)$ and $\B^\alpha\maps \B$ in $L^\infty([0,T],H)\cap L^2([0,T],V)$ as $\alpha\maps0$.
\end{proof}

\begin{theorem}[Blow-up criterion]\label{t:blow_up_criterion}
With the same notation and assumptions of Theorem \ref{t:Convergence}, and taking $\bu_0^\alpha=\bu_0$ and $\B_0^\alpha=\B_0$ for all $\alpha>0$, suppose that for some $T_*<\infty$, we have
\begin{align}\label{blow_up_ineq}
   \sup_{t\in[0,T_*)}\limsup_{\alpha\maps0}\alpha^2\|\bu^\alpha(t)\|^2>0.
\end{align}
Then the solutions to \eqref{MHD} with $\nu=0$, $\mu\geq0$ become singular in the time interval $[0,T_*)$.
\end{theorem}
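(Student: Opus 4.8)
The plan is to argue by contraposition, with the convergence Theorem~\ref{t:Convergence} serving as the essential tool. Concretely, I would assume that the solution $(\bu,\B)$ of \eqref{MHD} (with $\nu=0$) does \emph{not} become singular on $[0,T_*)$, and then show that this forces $\sup_{t\in[0,T_*)}\limsup_{\alpha\maps0}\alpha^2\|\bu^\alpha(t)\|^2=0$, in direct contradiction with the hypothesis \eqref{blow_up_ineq}. Here I interpret ``does not become singular on $[0,T_*)$'' as: the maximal time of existence of the regular solution from Theorem~\ref{thm_short_existence_MHD} satisfies $T_{\text{max}}\geq T_*$, so that $(\bu,\B)$ persists as an $H^3\cap V$ solution on the whole half-open interval $[0,T_*)$.

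First I would fix an arbitrary $T\in(0,T_*)$. Then $T<T_*\leq T_{\text{max}}$, so $T\in(0,T_{\text{max}})$, and since the data satisfy $\bu_0,\B_0\in H^3\cap V$ with $\bu_0^\alpha=\bu_0$, $\B_0^\alpha=\B_0$, all hypotheses of Theorem~\ref{t:Convergence} hold on $[0,T]$. Applying that theorem gives $\bu^\alpha\maps\bu$ in $L^\infty([0,T],V)$ as $\alpha\maps0$; in particular $\sup_{t\in[0,T]}\|\bu^\alpha(t)-\bu(t)\|\maps0$.

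The next step extracts a uniform bound from this convergence. Since $\bu\in C([0,T],H^3\cap V)$, the quantity $M_T:=\sup_{t\in[0,T]}\|\bu(t)\|$ is finite, and the $L^\infty([0,T],V)$-convergence yields $\sup_{t\in[0,T]}\|\bu^\alpha(t)\|\leq M_T+1$ for all sufficiently small $\alpha$. Hence $\alpha^2\|\bu^\alpha(t)\|^2\leq(M_T+1)^2\alpha^2$ for every $t\in[0,T]$, a bound uniform in $t$ that tends to $0$ as $\alpha\maps0$. Therefore $\sup_{t\in[0,T]}\limsup_{\alpha\maps0}\alpha^2\|\bu^\alpha(t)\|^2=0$ for each $T<T_*$, and because $[0,T_*)=\bigcup_{0<T<T_*}[0,T]$, taking the supremum over $T<T_*$ gives $\sup_{t\in[0,T_*)}\limsup_{\alpha\maps0}\alpha^2\|\bu^\alpha(t)\|^2=0$, which contradicts \eqref{blow_up_ineq}. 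This contradiction proves that $(\bu,\B)$ must become singular in $[0,T_*)$.

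The argument is short, so the work lies not in a hard estimate but in three conceptual points where I would be careful: (i) correctly translating ``no singularity on $[0,T_*)$'' into the statement $T_{\text{max}}\geq T_*$, which is exactly what makes every $T<T_*$ admissible in Theorem~\ref{t:Convergence}; (ii) the interchange of the outer $\sup_t$ with the inner $\limsup_\alpha$, which is legitimate precisely because the decaying bound $(M_T+1)^2\alpha^2$ is independent of $t$ on each compact subinterval; and (iii) the passage from the closed intervals $[0,T]$ to the half-open $[0,T_*)$ via the union. I would also flag the mild mismatch that Theorem~\ref{t:Convergence} is established for $\mu>0$ while the present statement permits $\mu\geq0$: for the resistive case $\mu>0$ the above is immediate, whereas the irresistive case $\mu=0$ would invoke the analogous convergence statement for the irresistive regularization rather than Theorem~\ref{t:Convergence} itself.
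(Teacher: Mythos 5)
Your proof is correct, and it reaches the contradiction by a genuinely shorter route than the paper does. Both arguments proceed by contradiction/contraposition and both invoke Theorem \ref{t:Convergence}, but you use its full strength: the convergence $\bu^\alpha\maps\bu$ in $L^\infty([0,T],V)$ yields $\sup_{t\in[0,T]}\|\bu^\alpha(t)\|\leq M_T+1$ for small $\alpha$, hence $\alpha^2\|\bu^\alpha(t)\|^2\leq (M_T+1)^2\alpha^2\maps0$ and the pointwise $\limsup$ vanishes on all of $[0,T_*)$. The paper never bounds $\|\bu^\alpha\|$ directly; instead it takes the inner product of \eqref{MHD_V_mom} and \eqref{MHD_V_mag} with $(\bu^\alpha,\B^\alpha)$ to obtain the conserved Voigt energy
\begin{equation*}
\alpha^2\|\bu^\alpha(t)\|^2+|\bu^\alpha(t)|^2+|\B^\alpha(t)|^2+\mu\int_0^t\|\B^\alpha(s)\|^2\,ds
=\alpha^2\|\bu_0\|^2+|\bu_0|^2+|\B_0|^2,
\end{equation*}
uses Theorem \ref{t:Convergence} only to pass to the limit in the lower-order terms, and then subtracts the energy identity satisfied by the solution of \eqref{MHD} itself (proved via the Lions-type lemma) to conclude $\limsup_{\alpha\maps0}\alpha^2\|\bu^\alpha(t)\|^2=0$. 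Your route buys brevity: no energy identities are needed at all. The paper's route buys robustness and transparency: it only requires convergence of $\bu^\alpha$ in $H$ and of $\B^\alpha$ in $L^\infty([0,T],H)\cap L^2([0,T],V)$, so it would survive a weaker convergence theorem that did not control $\|\bu^\alpha\|$ uniformly, and it isolates the physical mechanism -- energy conservation in both systems forces the excess Voigt energy to vanish. Two minor remarks: your point (ii) is not actually an interchange of limits, since all you need is the pointwise-in-$t$ statement $\limsup_{\alpha\maps0}\alpha^2\|\bu^\alpha(t)\|^2=0$, after which the supremum of the zero function is zero; and your flag about $\mu\geq0$ versus $\mu>0$ is apt, as the paper's own proof likewise treats only $\mu>0$ (both Theorem \ref{thm_short_existence_MHD} and Theorem \ref{t:Convergence} are stated for $\mu>0$).
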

\begin{proof}
   Suppose for a contradiction that $\bu$ and $\B$ are bounded in $L^\infty([0,T],H^3({\nT^2})\cap V)$, but that \eqref{blow_up_ineq} holds.   Taking the inner product of \eqref{MHD_V_mom} with $\bu^\alpha$ and the inner product of \eqref{MHD_V_mag} with $\B^\alpha$ and adding the results, we find after using \eqref{B:alt},
   \begin{align*}
      &\alpha^2\|\bu^\alpha(t)\|^2
      +|\bu^\alpha(t)|^2
       +|\B^\alpha(t)|^2
       +\mu\int_0^t\|\B^\alpha(s)\|^2\,ds
      =
        \alpha^2\|\bu_0\|^2
        +|\bu_0|^2
      +|\B_0|^2.
   \end{align*}
Taking the $\limsup$ as $\alpha\maps 0^+$ gives
\begin{align}\label{blow_up_limsup}
      \limsup_{\alpha\maps0^+}\alpha^2\|\bu(t)\|^2
      +|\bu(t)|^2
      +|\B(t)|^2
       +\mu\int_0^t\|\B(s)\|^2\,ds
     =
        |\bu_0|^2
      +|\B_0|^2.
   \end{align}

However, thanks to Theorem \ref{thm_short_existence_MHD}, we have $\bu,\B\in C([0,T],H^3\cap V)$.  Using Lions' Lemma (see, e.g., \cite[p. 176]{Temam_2001_Th_Num} or \cite[Corollary 7.3]{Robinson_2001}) and \eqref{B:alt} it is straight-forward to prove from \eqref{MHD} (with $\nu=0$, $\mu>0$), that on $[0,T]$,
\begin{align*}
      |\bu(t)|^2
      +|\B(t)|^2
       +\mu\int_0^t\|\B(s)\|^2\,ds
     =
        |\bu_0|^2
      +|\B_0|^2.
   \end{align*}
   so that \eqref{blow_up_limsup} contradicts \eqref{blow_up_ineq}.
\end{proof}

\begin{remark}
   We note that that in the case $\mu,\nu>0$, theorems similar to Theorems \ref{thm_short_existence_MHD}, \ref{t:Convergence}, and \ref{t:blow_up_criterion} can be proven with weaker assumptions on the initial data.  In the case of Theorem \ref{thm_short_existence_MHD}, one can use, for example, Galerkin methods, and ideas similar to those of in the theory of the Navier-Stokes equations (see, e.g., \cite{Constantin_Foias_1988, Temam_2001_Th_Num}).  The analogous of Theorems \ref{t:Convergence} and \ref{t:blow_up_criterion} can be proven using nearly identical methods to those employed above.
\end{remark}

\section*{Acknowledgements}
The authors are thankful for the warm hospitality
of  the Institute for Mathematics and its
Applications (IMA), University of Minnesota, where
part of this work was completed. This work was
supported in part by the NSF grants
no.~DMS-0708832, DMS-1009950. E.S.T. also
acknowledges the kind hospitality of the Freie
Universit\"at - Berlin, and the support of the
Alexander von Humboldt Stiftung/Foundation and the
Minerva Stiftung/Foundation.

\begin{scriptsize}

\providecommand{\bysame}{\leavevmode\hbox to3em{\hrulefill}\thinspace}
\providecommand{\MR}{\relax\ifhmode\unskip\space\fi MR }
\providecommand{\MRhref}[2]{%
  \href{http://www.ams.org/mathscinet-getitem?mr=#1}{#2}
}
\providecommand{\href}[2]{#2}

\end{scriptsize}

\end{document}